\documentclass[12pt, reqno]{amsart}
\usepackage{amsthm,amssymb}
\usepackage{graphicx}
\usepackage{enumerate}
\usepackage{amssymb,amsmath,graphicx,amsfonts,euscript}
\usepackage{color}

\setlength{\parindent}{2em}

\usepackage[margin=3cm, a4paper]{geometry}

\def\H{{\cal H}}

\def\H1{H^1(\R)}

 \newcommand{\R}{\mathbb{R}}

 \setlength{\marginparwidth}{2cm}
 \setlength{\marginparwidth}{2cm}
 \newcommand{\Del}[1]{}

\numberwithin{equation}{section}

\newtheorem{thm}{Theorem}[section]
\newtheorem{cor}[thm]{Corollary}
\newtheorem{lem}[thm]{Lemma}

\newtheorem{definition}[thm]{Definition}

\theoremstyle{remark}

\newtheorem*{exam*}{Examples}

\begin{document}

\setcounter{page}{1}

\title[Instability of standing waves]{Instability of the standing waves for the nonlinear Klein-Gordon equations in one dimension}

%

\author[Yifei Wu]{Yifei Wu}
\address{Center for Applied Mathematics, Tianjin University,
Tianjin 300072, P.R.China}
\email{yerfmath@gmail.com}
\subjclass[2010]{Primary  35L70; Secondary 35B35}


\keywords{Nonlinear Klein-Gordon equation,
instability, standing waves, critical frequency, one dimension}

\maketitle

\begin{abstract}\noindent
In this paper, we consider the following nonlinear Klein-Gordon equation
\begin{align*}
\partial_{tt}u-\Delta u+u=|u|^{p-1}u,\qquad t\in \R,\ x\in \R^d,
\end{align*}
with $1<p<  1+\frac{4}{d}$. The equation has the standing wave solutions $u_\omega=e^{i\omega t}\phi_{\omega}$ with the frequency $\omega\in(-1,1)$, where $\phi_{\omega}$ obeys
\begin{align*}
-\Delta \phi+(1-\omega^2)\phi-\phi^p=0.
\end{align*}
It was proved by Shatah (1983), and Shatah, Strauss (1985) that there exists a critical frequency $\omega_c\in (0,1)$ such that  the standing waves solution $u_\omega$ is orbitally stable when $\omega_c<|\omega|<1$, and orbitally unstable when $|\omega|<\omega_c$. Further, the critical case $|\omega|=\omega_c$ in the high dimension $d\ge 2$ was considered by Ohta, Todorova (2007), who proved that it is strongly unstable, by using the virial identities and the radial Sobolev inequality. The one dimension problem was left after then. In this paper, we consider the one-dimension problem and prove that it is orbitally unstable when $|\omega|=\omega_c$.
\end{abstract}

\section{Introduction}
In this paper, we consider the stability theory of the following nonlinear Klein-Gordon equation
\begin{align}\label{eq:NKG}
\begin{split}
&\partial_{tt}u-\Delta u+u=|u|^{p-1}u,\qquad t\in \R,\ x\in \R^d,
\end{split}
\end{align}
with the initial data
\begin{align}\label{eq:Initialdata}
u(0,x) =u_0(x),\quad u_t(0,x) =u_1(x).
\end{align}
Here $d\ge 1$ and $1<p<  1+\frac{4}{d-2}$ ($1<p<\infty$ when $d=1,2$). The $H^1\times L^2$-solution $(u,u_t)$ of \eqref{eq:NKG}--\eqref{eq:Initialdata} obeys the following charge, momentum and energy conservation laws,
\begin{align}
Q(u,u_t)&=\mbox{Im}\int u \bar u_t\,dx=Q(u_0,u_1);\label{charge}\\
P(u,u_t)&=\mbox{Re}\int\nabla u \bar u_t\,dx=P(u_0,u_1);\label{Mom}\\
E(u,u_t)&=\|u_t\|_{L^2}^2+\|\nabla u\|_{L^2}^2+\|u\|_{L^2}^2-\frac2{p+1}\|u\|_{L^{p+1}}^{p+1}=E(u_0,u_1).\label{Energy}
\end{align}
The well-posedness for the Cauchy problem \eqref{eq:NKG}--\eqref{eq:Initialdata} was well understand in the energy space $  H^1(\R^d)\times L^2(\R^d)$. More precisely, for any $(u_0,u_1)\in   H^1(\R^d)\times L^2(\R^d)$, there exists a unique solution $(u,u_t)\in C([0,T);  H^1(\R^d)\times L^2(\R^d))$ of \eqref{eq:NKG}--\eqref{eq:Initialdata}, with the maximal lifetime $T=T(\|(u_0,u_1)\|_{H^1\times L^2})$. If $T=\infty$, we call that the Cauchy problem \eqref{eq:NKG}--\eqref{eq:Initialdata} is global well-posedness. If $T<\infty$, we call that the solution blows up in finite time. See for examples Ginibre and Velo \cite{GiniVelo-MZ-85, GiniVelo-AIHPAN-89} for the local and global well-posedness, and Payne and Sattinger \cite{PaSa-IJM-75} for the blowing-up. Further results on the scattering, see \cite{IbMaNa-APDE-11, Inui-AHP-17, KiStVi-TAMS-10, MiZh-DCDS-2016} and the references therein.

The equation \eqref{eq:NKG} has the standing waves solution $e^{i\omega t}\phi_{\omega}$, where $\phi_{\omega}$ is the ground state solution of the following elliptic equation
\begin{align}\label{Elliptic}
-\Delta \phi+(1-\omega^2)\phi-\phi^p=0.
\end{align}
The equation \eqref{Elliptic} exists solutions when the parameter $|\omega|<1$, see \cite{Strauss-CMP-77} for example. In particular, in one dimension case, the solution to \eqref{Elliptic} is unique up to the symmetries of the rotation and the spatial transformation. Moreover, the ground state solution $\phi_{\omega}$ is exponential decaying at infinity when $|\omega|<1$. See also \cite{BeGhLecoz-CPDE-14,CoLecoz-JMPA-11,CoMa-CPDE-17,CoMaMe-RMI-11,CoMu-FM-14,LecozLiTsai-PRSEA-15} for some instances on the existence of the multi-solitary waves of the nonlinear Klein-Gordon and the nonlinear Schr\"odinger equations.

The stability theory of the the standing waves solution $e^{i\omega t}\phi_{\omega}$ has been widely studied. In particular, Berestycki and Cazenave \cite{BeCa-CRASP-81} proved that it is strong instability when $\omega=0$ and $1<p< 1+\frac{4}{d-2}$, which is in the sense that an arbitrarily small perturbation of the initial data can find the perturbed solution blowing up in finite time. See also Shatah \cite{Shatah-TAMS-85} for the related works. One may find the big difference between the nonlinear  Klein-Gordon equation and the nonlinear Schr\"odinger equation, because of the lack of the mass conservation law.
Further, when $\omega\ne 0$, Shatah \cite{Shatah-CMP-83} proved that it is orbital stability when $1<p<1+\frac{4}d$ and $\omega_c<|\omega|<1$, where the frequency $\omega_c$ is equal to
$$
\omega_c=\sqrt{\frac{p-1}{4-(d-1)(p-1)}}.
$$
The number $\omega_c$ is critical. Indeed, Shatah and Strauss \cite{ShatahStrauss-CMP-85} showed further that when $1<p<1+\frac{4}d, |\omega|<\omega_c$ or $1+\frac{4}d<p<1+\frac{4}{d-2}, |\omega|<1$, the standing waves solution $e^{i\omega t}\phi_{\omega}$ is orbital instability. See also Stuart \cite{Stuart-JMPA-01} for the stability of the solitary waves. The critical cases, $|\omega|=\omega_c$ when $1<p<1+\frac{4}d$, are degenerate based on the theories of Grillakis, Shatah and Strauss \cite{GrShStr-87, GrShStr-90}. The degenerate cases were further investigated by several authors, such as Comech, Pelinovsky \cite{CoPe-CPAM-03}, Maeda \cite{Maeda-NA-10, Maeda-JFA-12}, and Ohta \cite{Ohta-JFA-11}. In particular, as an application of the theorems established in \cite{CoPe-CPAM-03, Maeda-JFA-12},  the standing waves solution $e^{i\omega t}\phi_{\omega}$ is orbitally unstable in the critical cases $|\omega|=\omega_c$ when $2\le p<1+\frac{4}d$. The region $1<p<2$ was not covered because of the lack of the regularity for the relevant functionals.
Further, Ohta and Todorova \cite{OhtaTod-DCDS-05, OhtaTod-SIAM-07} (see also
 \cite{JeLeC-TAMS-09} for a companion result) proved the strong instability when $d\ge 2, 1<p<1+\frac{4}d, |\omega|\le \omega_c$ or $d\ge 2, 1+\frac{4}d\le p<1+\frac{4}{d-2}, |\omega|<1$,  which cover the entire instability region in the case of $d\ge 2$. The argument the authors used was the variation argument combining with the virial identities. Hence, the stability and instability regions were complete division except the one dimension cases, and the only left problem is the stability theory of the soliton in the case of $1<p<2, |\omega|=\omega_c$ when $d=1$. Unfortunately, the argument in \cite{OhtaTod-SIAM-07} is not available in one dimension problem, because the argument relies on the radial choice of the instable data, which gives the small control of the remainder terms from the localized virial identities by the radial Sobolev inequality. In one dimension, Liu, Ohta and Todorova \cite{LiuOhtaTod-AIPANL-07} considered the strong instability in some regions which still has gap from the critical frequency. In present paper, we study the instability of the standing waves solution in the critical case in one dimension.

Before stating our theorem, we recall some definitions. Let $v=u_t$, $\vec u=(u,v)^T$, $\vec u_0=(u_0,u_1)^T$,  and  $\overrightarrow{\Phi_\omega}=(\phi_\omega,i\omega\phi_\omega)^T$.
For $\varepsilon>0$, we denote the set $ U_\varepsilon\big(\overrightarrow{\Phi_\omega}\big)$ as
$$ U_\varepsilon\big(\overrightarrow{\Phi_\omega}\big)
=\{\vec u\in H^1(\mathbb{R})\times L^2(\R): \inf_{(\theta,y)\in\mathbb{R}^2}\|\vec u-e^{i\theta}\overrightarrow{\Phi_\omega}(\cdot-y)\|_{H^1\times L^2}<\varepsilon\}.$$

\begin{definition}\label{Def:stability}
We say that the solitary wave solution $u_\omega$ of \eqref{eq:NKG} is stable
if for any $\varepsilon >0$ there exists $\delta >0$ such that if $\|\vec u_0-\overrightarrow{\Phi_\omega}\|_{H^1\times L^2}< \delta$,
then the solution $\vec u(t)$ of \eqref{eq:NKG} with $\vec u(0)=\vec u_0$ exists for all $t\in\R$,
and $\vec u(t)\in U_\varepsilon\big(\overrightarrow{\Phi_\omega}\big)$ for all $t\in\R$.
Otherwise, $u_\omega$ is said to be unstable.
\end{definition}

Then the main result in the present paper is
\begin{thm}\label{thm:main1}
Let $d=1$, $1<p<5$, $\omega\in (-1,1)$ and $\phi_\omega$ be a solution of \eqref{Elliptic}. If $|\omega|=\sqrt{\frac{p-1}4}$,
then the standing waves solution $e^{i\omega t}\phi_\omega$ is orbitally unstable.
\end{thm}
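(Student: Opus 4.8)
The plan is to argue by contradiction, combining a modulation decomposition with a monotone virial-type functional, after first reducing the degeneracy of the critical frequency to an explicit scalar computation. Introduce the conserved action $\mathcal{S}(\vec u)=E(\vec u)+2\omega_c Q(\vec u)$, for which $\overrightarrow{\Phi_{\omega_c}}$ is a critical point, and set $d(\omega)=E(\overrightarrow{\Phi_\omega})+2\omega\,Q(\overrightarrow{\Phi_\omega})$. Using the explicit one-dimensional profile together with the scaling $\phi_\omega(x)=(1-\omega^2)^{1/(p-1)}Q_0(\sqrt{1-\omega^2}\,x)$ (where $Q_0$ is the ground state of $-Q''+Q=Q^p$, so $\phi_\omega$ is a $\mathrm{sech}^{2/(p-1)}$ profile), I would compute $\|\phi_\omega\|_{L^2}^2=(1-\omega^2)^{\frac{2}{p-1}-\frac12}\|Q_0\|_{L^2}^2$, whence by the envelope identity $d'(\omega)=2Q(\overrightarrow{\Phi_\omega})=-2\omega\|\phi_\omega\|_{L^2}^2$ and $d''(\omega)=-2\|Q_0\|_{L^2}^2(1-\omega^2)^{\frac{2}{p-1}-\frac32}\big(1-\tfrac{4}{p-1}\omega^2\big)$. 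This vanishes exactly at $\omega^2=\tfrac{p-1}{4}$, while a further differentiation gives $d'''(\omega_c)\neq0$ with a definite sign. The point of carrying this out at the level of the scalar $d$ is that $d$ is real-analytic in $\omega$ for \emph{every} $1<p<5$, so the characteristic degeneracy ``$d''(\omega_c)=0,\ d'''(\omega_c)\neq0$'' is obtained without ever differentiating the functional $\mathcal{S}$ three times; this is precisely what lets the argument reach $1<p<2$, where the Fréchet $C^3$ theory of Comech--Pelinovsky and Maeda breaks down.

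Next I would set up the linearized structure. Writing the system as $\partial_t\vec u=\mathcal{J}\,\nabla E(\vec u)$ with symplectic operator $\mathcal{J}$, the operator $\mathcal{J}\mathcal{S}''(\overrightarrow{\Phi_{\omega_c}})$ annihilates the symmetry directions $\partial_\theta\overrightarrow{\Phi},\ \partial_y\overrightarrow{\Phi}$ and has $\psi:=\partial_\omega\overrightarrow{\Phi_\omega}|_{\omega=\omega_c}$ as a generalized eigenvector. The scalar identity $\langle \mathcal{S}''(\overrightarrow{\Phi_{\omega_c}})\psi,\psi\rangle=-d''(\omega_c)=0$ is exactly the solvability condition producing one additional Jordan vector $\chi$ with $\mathcal{J}\mathcal{S}''\chi=\psi$, while $d'''(\omega_c)\neq0$ means the chain has length precisely three, the algebraic signature of a secular (polynomially growing) linearized mode. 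I would then prove the coercivity estimate $\mathcal{S}''(\overrightarrow{\Phi_{\omega_c}})[\vec\eta,\vec\eta]\gtrsim\|\vec\eta\|_{H^1\times L^2}^2$ on the codimension-finite subspace symplectically orthogonal to the generalized kernel and satisfying the linearized charge constraint, following the standard spectral decomposition (one simple negative mode, compensated on the constraint surface). Crucially, only the \emph{second} variation enters, and the second variation of the potential term involves $|u|^{p-1}$, which is locally bounded for $p>1$; the entire variational step therefore lives in the $C^2$ world and is insensitive to the low regularity.

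Assuming, for contradiction, that $e^{i\omega_c t}\phi_{\omega_c}$ is stable, I would decompose the solution as $\vec u(t)=e^{i\theta(t)}\big(\overrightarrow{\Phi_{\omega(t)}}+\vec\eta(t)\big)(\cdot-y(t))$, with orthogonality conditions removing the directions above, so that stability forces $\vec\eta(t)$, $|\omega(t)-\omega_c|$ and the modulation parameters to stay bounded for all $t$; conservation of $\mathcal{S}$ together with the coercivity then quantitatively controls $\|\vec\eta(t)\|$ and $|\omega(t)-\omega_c|$. The engine of the contradiction is the scalar functional $A(t)=\langle \mathcal{S}''(\overrightarrow{\Phi_{\omega(t)}})\,\chi,\ \vec\eta(t)\rangle$ (equivalently a localized momentum/charge pairing along the top Jordan direction), evaluated on an initial perturbation chosen with a small component along $\chi$ of the correct sign. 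Differentiating in $t$ and inserting the modulation equations, the quadratic-in-$\vec\eta$ contributions cancel through the chain relations $\mathcal{J}\mathcal{S}''\chi=\psi$ and $\mathcal{J}\mathcal{S}''\psi\in\mathrm{span}\{\partial_\theta\overrightarrow\Phi\}$, the next term is governed by $d''$ and vanishes at $\omega_c$, and the surviving leading term carries the factor $d'''(\omega_c)\neq0$ supplied by the first paragraph. This yields a differential inequality $\dot A(t)\gtrsim c>0$ up to controllable remainders, so $A(t)\to\infty$, contradicting the boundedness of $\vec\eta$; hence $e^{i\omega_c t}\phi_{\omega_c}$ is unstable in the sense of Definition~\ref{Def:stability}.

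The hard part will be closing this last step for $1<p<2$. Because the degeneracy is of third order, the drift along the neutral direction $\chi$ is only polynomial in $t$, so the monotone gain $c$ is weak and must be defended against nonlinear remainders built from differences of $|u|^{p-1}u$, which are merely Hölder of order $p-1<1$. The technical heart is therefore to estimate these rough remainders using only the $C^2$ structure of $\mathcal{S}$ and the smallness of $\vec\eta$, arranging the orthogonality conditions and the localization in $A$ so that every term I cannot legitimately differentiate is either quadratically small in $\vec\eta$ or absorbed by the coercivity, while the single genuinely cubic effect is read off from the exact soliton family through the smooth scalar $d(\omega)$ rather than from a third Fréchet derivative. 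Making the modulation equations, the coercive lower bound, and the remainder estimates fit together quantitatively, so that $\dot A$ retains its sign uniformly in $t$, is where essentially all the work lies.
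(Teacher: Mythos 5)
Your proposal is, at its core, the Comech--Pelinovsky/Maeda secular-mode argument: a length-three Jordan chain for $\mathcal{J}S_\omega''\big(\overrightarrow{\Phi_{\omega_c}}\big)$, a pairing $A(t)$ with the top Jordan vector $\chi$, and a drift driven by $d'''(\omega_c)\neq 0$. The scalar computations are fine (your $d''(\omega_c)=0$ is exactly Lemma~\ref{lem:QE-Phi}(1), up to a harmless normalization of the multiplier), but the central claim --- that reading $d'''$ off the smooth scalar map $\omega\mapsto d(\omega)$ circumvents the $C^3$ obstruction --- is not justified, and this is where the proof actually lives. The smoothness of $d$ only encodes the behavior of $E+\omega Q$ \emph{restricted to the soliton curve} $\{\overrightarrow{\Phi_\omega}\}$. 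What the secular-mode argument needs is an expansion of the flow transverse to that curve: when you differentiate $A(t)$ and insert the equation, the nonlinear remainder $S_\omega'\big(\overrightarrow{\Phi_\omega}+\vec\eta\big)-S_\omega'\big(\overrightarrow{\Phi_\omega}\big)-S_\omega''\big(\overrightarrow{\Phi_\omega}\big)\vec\eta$ must be shown to be smaller than the cubic-order driving term carrying $d'''(\omega_c)$. For $1<p<2$ the nonlinearity $|u|^{p-1}u$ is only $C^1$ with $(p-1)$-H\"older derivative, so this remainder is only $O\big(\|\vec\eta\|^{p}\big)$ with $p<3$ (indeed $p<2$), hence it \emph{dominates} the effect you are trying to detect; no choice of orthogonality conditions or localization in $A$ removes it, because it is a genuine transverse nonlinear term, not a term along the soliton manifold. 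This is precisely why the theorems of \cite{CoPe-CPAM-03, Maeda-JFA-12} stop at $p\ge 2$, as the paper notes, and your final paragraph concedes that closing this step is open in your scheme. So there is a genuine gap, not a technicality to be deferred.

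The paper's proof avoids any third-order expansion altogether, which is what makes $1<p<2$ accessible. It takes data $\vec u_0=(1+a)\overrightarrow{\Phi_\omega}$ as in \eqref{ID-choice} and uses the localized virial functional $I(t)=\frac4{p-1}\mbox{Re}\int u\bar u_t\,dx+2\mbox{Re}\int \varphi_R(x-y(t))u_x\bar u_t\,dx$, which is uniformly bounded along any solution staying in $U_\varepsilon\big(\overrightarrow{\Phi_\omega}\big)$, see \eqref{bound-It}. By Lemma~\ref{lem:general-Virial}, $I'(t)$ equals a conserved combination of $E$ and $Q$, plus the nonnegative term $\frac{8}{p-1}\|u_t-i\omega u\|_{L^2}^2$, plus tails; by Lemma~\ref{lem:mainpart} the conserved part equals $\frac{5-p}{p-1}\cdot 4a\omega^2\|\phi_\omega\|_{L^2}^2+O(a^2)$, i.e.\ it is positive and \emph{linear} in $a$. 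All remainders are then controlled with second-order information only: the degeneracy $d''(\omega_c)=0$ enters through Corollary~\ref{cor:S-lambda}, the coercivity Lemma~\ref{lem:Coercivity} (with the third orthogonality direction $\overrightarrow{\Psi_\omega}$ chosen for exactly this purpose) yields $\|\vec\xi\|_{H^1\times L^2}^2=O(a|\lambda-1|+a^2)+o\big((\lambda-1)^2\big)$ in Lemma~\ref{lem:Remainer}, and the piece $(\lambda-1)^2\omega^2\|\phi_\omega\|_{L^2}^2$ extracted from $\|u_t-i\omega u\|_{L^2}^2$ in Lemma~\ref{lem:control-lambda} absorbs the $o\big((\lambda-1)^2\big)$ errors. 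Hence $I'(t)\gtrsim a$ uniformly in $t$, contradicting the boundedness of $I$; the quantity $d'''(\omega_c)$ never appears. If you wish to salvage your route, you would need a substitute for the third-order expansion along the flow; the paper's answer is to replace the secular mode by this virial/monotonicity mechanism.
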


The method used to prove the theorem is the modulation argument combining with the virial identity, which is completely different from \cite{CoPe-CPAM-03, Maeda-JFA-12, OhtaTod-SIAM-07} and inspired in the work of \cite{MaMe-GFA-2001}. The modulation argument used here was introduced by Weinstein \cite{W2}, and strengthened by  the mathematicians such as Martel, Merle, Rapha\"{e}l \cite{MaMe-GFA-2001, Me-JAMS-2001, MePa-IV-2004}.  In particular, in the Klein-Gordon setting, we use the modulation method applied by Bellazzini1, Ghimenti, and
Le Coz \cite{BeGhLecoz-CPDE-14}, who considered the total linearized action. The modulation argument is much problem dependent. 
Rough speaking, we argue for contradiction and suppose that the solution is close to the standing wave solution in the whole time, then the modulation argument gives the smallness of the perturbation up to the rotation, spatial transformation and scaling. Ultimately, we use the local virial identity to preclude that scaling parameter
always keeps near the initial size in the whole time.  In particular, the smallness of the perturbation gives the tiny estimates of the remainder terms from the local virial identity. Then the control of the scaling parameter become one of the key ingredients in the proof of the theorem.
In the present paper,   we utilize the flatness of functional $E\big(\overrightarrow{\Phi_\omega}\big)+\lambda\omega Q\big(\overrightarrow{\Phi_\omega}\big)$ in $\lambda$
to establish the high order control of the the scaling parameter $\lambda$;
and under the contradictory hypotheses, we utilize the term $\|v-i\omega u\|_{L^2}^2$ from the virial identity, the charge conservation law and appropriately choose the orthogonal condition in the coercivity lemma to give the upper control the scaling parameter.

It is worth noting that our argument used here does not rely on the regularity of the nonlinear term. 
Further, we believe the strong instability is true in our case and our argument here could be used to prove the strong result. It leaves us an interesting problem to pursue in the future.

Now the following is the organization of the paper. In Section 2, we give some preliminaries. It includes some basic definitions and properties, the coercivity property of the Hessian, and the modulation statement.  In Section 3, we give the virial identities, control the remainder function and the scaling parameter, and lastly  prove the main theorem.

\vskip 2cm

\section{Preliminary}

\vskip .2cm

\subsection{Notations}

For $f,g\in L^2(\mathbb{R})=L^2(\mathbb{R,C})$, we define
$$\langle f,g\rangle=\mbox{Re}\int_{\mathbb{R}}f(x)\overline{g(x)}\,dx$$
and regard $L^2(\mathbb{R})$ as a real Hilbert space. Similarly, for $\vec f,\vec g\in \big(L^2(\mathbb{R})\big)^2=\big(L^2(\mathbb{R,C})\big)^2$, we define
$$\langle \vec f,\vec g\rangle=\mbox{Re}\int_{\mathbb{R}}\vec f(x)^T\cdot \overline{\vec g(x)}\,dx.$$

For a function $f(x)$, its $L^{q}$-norm $\|f\|_{L^q}=\Big(\displaystyle\int_{\mathbb{R}} |f(x)|^{q}dx\Big)^{\frac{1}{q}}$
and its $H^1$-norm $\|f\|_{H^1}=(\|f\|^2_{L^2}+\|\partial_x f\|^2_{L^2})^{\frac{1}{2}}$.

Further, we write $X \lesssim Y$ or $Y \gtrsim X$ to indicate $X \leq CY$ for
some constant $C>0$.  We use the notation $X \sim Y$ whenever $X
\lesssim Y \lesssim X$. Also, we use $O(Y)$ to denote any quantity $X$ such
that $|X| \lesssim Y$; and use $o(Y)$ to denote any quantity $X$ such
that $X\to 0$, if $ Y\to 0$.

\subsection{Some basic definitions and properties}
In the following, we only consider one dimension problem and the case of $1<p<5$, in which $\omega_c=\sqrt{\frac{p-1}4}$.
Let $\vec u=(u,v)^T, \overrightarrow{\Phi_\omega}=(\phi_\omega,i\omega\phi_\omega)^T$. Recall that the conserved qualities,
\begin{align*}
Q(\vec u)&=\mbox{Im}\int u \bar u_t\,dx,\\
E(\vec u)&=\|u_t\|_{L^2}^2+\| u_{x}\|_{L^2}^2+\|u\|_{L^2}^2-\frac2{p+1}\|u\|_{L^{p+1}}^{p+1}.
\end{align*}
First, we give some basic properties on the charge and energy.
\begin{lem}\label{lem:QE-Phi} The following equalities hold,
\begin{itemize}
  \item[(1)] $
\frac{d}{d\omega}Q\left(\overrightarrow{\Phi_\omega}\right)\Big|_{\omega=\pm\omega_c}=0;
$
  \item[(2)] If $|\omega|=\omega_c$, then $(p+3)E\left(\overrightarrow{\Phi_\omega}\right)+8\omega Q\left(\overrightarrow{\Phi_\omega}\right)=0$.
\end{itemize}
\end{lem}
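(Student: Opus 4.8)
The plan is to reduce both identities to explicit integral relations for the ground state, using that in one dimension $\phi_\omega$ is a rescaling of a fixed profile. Writing $\mu=\sqrt{1-\omega^2}$ and letting $Q_0$ denote the unique positive even solution of $-Q_0''+Q_0-Q_0^p=0$ on $\R$ (the explicit hyperbolic–secant profile), one checks directly by substitution that $\phi_\omega(x)=\mu^{2/(p-1)}Q_0(\mu x)$. A single change of variables then produces the $\omega$-dependence
\[
M(\omega):=\|\phi_\omega\|_{L^2}^2=(1-\omega^2)^{\frac{5-p}{2(p-1)}}\,\|Q_0\|_{L^2}^2,
\]
which is the engine for part (1). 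Since the two components of $\overrightarrow{\Phi_\omega}$ are $\phi_\omega$ and $i\omega\phi_\omega$ with $\phi_\omega$ real, we also get at once
\[
Q\big(\overrightarrow{\Phi_\omega}\big)=\im\int \phi_\omega\,\overline{i\omega\phi_\omega}\,dx=-\omega\,M(\omega).
\]

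For part (1) I would simply differentiate. With $\alpha=\frac{5-p}{2(p-1)}$ we have $Q\big(\overrightarrow{\Phi_\omega}\big)=-\|Q_0\|_{L^2}^2\,\omega(1-\omega^2)^{\alpha}$, hence
\[
\frac{d}{d\omega}Q\big(\overrightarrow{\Phi_\omega}\big)=-\|Q_0\|_{L^2}^2\,(1-\omega^2)^{\alpha-1}\big[\,1-(1+2\alpha)\omega^2\,\big].
\]
Since $1+2\alpha=\frac{4}{p-1}$, the bracket vanishes precisely at $\omega^2=\frac{p-1}{4}=\omega_c^2$, giving (1).

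For part (2) I would use two integral identities for $\phi=\phi_\omega$ with $m=1-\omega^2$. Multiplying $-\phi''+m\phi-\phi^p=0$ by $\phi$ and integrating gives $\|\phi'\|_{L^2}^2+m\,M(\omega)=\int\phi^{p+1}\,dx$, while the Pohozaev identity (equivalently, integrating over $\R$ the first integral $\tfrac12(\phi')^2=\tfrac m2\phi^2-\tfrac1{p+1}\phi^{p+1}$, whose constant vanishes by decay) gives $\|\phi'\|_{L^2}^2-m\,M(\omega)=-\tfrac{2}{p+1}\int\phi^{p+1}\,dx$. Solving this $2\times2$ linear system expresses both $\|\phi'\|_{L^2}^2$ and $\int\phi^{p+1}\,dx$ as fixed multiples of $m\,M(\omega)$; substituting these into $E\big(\overrightarrow{\Phi_\omega}\big)=(1+\omega^2)M(\omega)+\|\phi'\|_{L^2}^2-\tfrac{2}{p+1}\int\phi^{p+1}\,dx$ yields $E\big(\overrightarrow{\Phi_\omega}\big)=c_p(\omega)\,M(\omega)$ for an explicit rational $c_p(\omega)$. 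The desired relation then follows by forming $(p+3)E\big(\overrightarrow{\Phi_\omega}\big)+8\omega Q\big(\overrightarrow{\Phi_\omega}\big)$ and inserting $\omega^2=\tfrac{p-1}{4}$.

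The computation has no genuine analytic obstacle; the delicate part is entirely bookkeeping. In part (1) one must pin down the scaling exponent $\tfrac{5-p}{2(p-1)}$ exactly, since it is precisely this number that forces the critical value $\omega_c$ to appear. In part (2) one must select the correct pair of Pohozaev-type identities and, crucially, keep the normalizations of $E$ and $Q$ mutually consistent — it is the chosen normalization that fixes the precise numerical coefficient multiplying $\omega Q$ in the stated identity, so this is the step I would check most carefully.
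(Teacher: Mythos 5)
Your part (1) is correct and is exactly the paper's argument: the identity $Q\big(\overrightarrow{\Phi_\omega}\big)=-\omega\|\phi_\omega\|_{L^2}^2$, the scaling $\phi_\omega(x)=(1-\omega^2)^{1/(p-1)}\phi_0\big(\sqrt{1-\omega^2}\,x\big)$ (the explicit hyperbolic-secant profile is never actually needed, only the scaling), the exponent $\frac{2}{p-1}-\frac12=\frac{5-p}{2(p-1)}$, and differentiation in $\omega$, with the bracket vanishing precisely at $\omega^2=\frac{p-1}{4}=\omega_c^2$. Your part (2) also follows the paper's strategy: multiply \eqref{Elliptic} by $\phi_\omega$ and integrate, pair this with the one-dimensional Pohozaev/first-integral identity, solve the resulting $2\times2$ system, and substitute into $E$.

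However, in part (2) you deferred exactly the step at which the argument breaks, and it does break. Writing $M=\|\phi_\omega\|_{L^2}^2$, and using the normalization you committed to, namely $E\big(\overrightarrow{\Phi_\omega}\big)=(1+\omega^2)M+\|\phi_\omega'\|_{L^2}^2-\frac{2}{p+1}\int\phi_\omega^{p+1}\,dx$ (which is the paper's displayed definition \eqref{Energy}), the $2\times 2$ system gives
\begin{align*}
\|\phi_\omega'\|_{L^2}^2=\frac{p-1}{p+3}(1-\omega^2)M,\qquad \int\phi_\omega^{p+1}\,dx=\frac{2(p+1)}{p+3}(1-\omega^2)M,
\end{align*}
hence $E\big(\overrightarrow{\Phi_\omega}\big)=2\|\phi_\omega'\|_{L^2}^2+2\omega^2M=\frac{2}{p+3}\big(p-1+4\omega^2\big)M$, and therefore
\begin{align*}
(p+3)E\big(\overrightarrow{\Phi_\omega}\big)+8\omega Q\big(\overrightarrow{\Phi_\omega}\big)
=2\big(p-1+4\omega^2\big)M-8\omega^2M=2(p-1)M>0
\end{align*}
for \emph{every} $\omega$, in particular at $\omega=\pm\omega_c$. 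So, carried out faithfully, your computation refutes identity (2) rather than proving it; what it does prove under your normalization is $(p+3)E+16\omega Q=0$ at $|\omega|=\omega_c$. The statement as written is true precisely when $E$ denotes the standard halved energy $\frac12\int\big(|u_t|^2+|u_x|^2+|u|^2\big)dx-\frac{1}{p+1}\int|u|^{p+1}dx$, i.e.\ one half of \eqref{Energy}; that is the convention the rest of the paper actually runs on (it is the only one for which the coefficient matching in the key identity in the proof of Lemma \ref{lem:general-Virial} closes). The paper's own proof of (2) contains the compensating factor-of-two slip, asserting $E\big(\overrightarrow{\Phi_\omega}\big)=\frac{1}{p+3}(p-1+4\omega^2)\|\phi_\omega\|_{L^2}^2$ where the stated definition yields $\frac{2}{p+3}(p-1+4\omega^2)\|\phi_\omega\|_{L^2}^2$. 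Your instinct that the mutual normalization of $E$ and $Q$ is the step to check most carefully was exactly right, but a proof cannot leave that check undone: you must either adopt the halved energy throughout, or prove the corrected identity $(p+3)E+16\omega Q=0$ for the definition \eqref{Energy}.
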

\begin{proof}
Note that
$$
Q\left(\overrightarrow{\Phi_\omega}\right)=-\omega \|\phi_\omega\|_{L^2}^2.
$$
Moreover, by rescaling, we find,
$$
\phi_\omega(x)=(1-\omega^2)^\frac1{p-1}\phi_0\big(\sqrt{1-\omega^2}x\big).
$$
This implies that
$$
Q\left(\overrightarrow{\Phi_\omega}\right)=-\omega (1-\omega^2)^{\frac2{p-1}-\frac12} \|\phi_0\|_{L^2}^2.
$$
Hence by a direct computation, we have
$$
\frac{d}{d\omega}Q\left(\overrightarrow{\Phi_\omega}\right)
=-(1-\omega^2)^{\frac2{p-1}-\frac32}\Big[1-\frac4{p-1}\omega^2\Big]\|\phi_0\|_{L^2}^2.
$$
This gives (1). For (2), we have
$$
E\left(\overrightarrow{\Phi_\omega}\right)=\|\partial_x\phi_\omega\|_{L^2}^2+(1+\omega^2)\|\phi_\omega\|_{L^2}^2-\frac2{p+1}\|\phi_\omega\|_{L^{p+1}}^{p+1}.
$$
From the equation \eqref{Elliptic}, we obtain that
\begin{align*}
&\|\partial_x\phi_\omega\|_{L^2}^2+(1-\omega^2)\|\phi_\omega\|_{L^2}^2-\|\phi_\omega\|_{L^{p+1}}^{p+1}=0;\\
&\|\partial_x\phi_\omega\|_{L^2}^2-(1-\omega^2)\|\phi_\omega\|_{L^2}^2+\frac2{p+1}\|\phi_\omega\|_{L^{p+1}}^{p+1}=0.
\end{align*}
These give that
$$
E\left(\overrightarrow{\Phi_\omega}\right)=\frac1{p+3}\big(p-1+4\omega^2\big)\|\phi_\omega\|_{L^2}^2.
$$
Combining the value of $Q\left(\overrightarrow{\Phi_\omega}\right)$ above, we obtain (2).
\end{proof}

Now we define the functional $S_\omega$ as
$$
S_\omega(\vec u)=E(\vec u)+\omega Q(\vec u).
$$
Then we have
\begin{align*}
S_\omega'(\vec u)=2\left(
                    \begin{array}{c}
                      -u_{xx}+u-|u|^{p-1}u \\
                      v \\
                    \end{array}
                  \right)
                  +2i\omega
                  \left(
                    \begin{array}{c}
                      v \\
                     -u \\
                    \end{array}
                  \right).
\end{align*}
Note that $S_\omega'(\overrightarrow{\Phi_\omega})=0$. Moreover, for the vector $\vec f=(f,g)^T$, a direct computation shows that
\begin{align}\label{23.06}
S_\omega''\big(\overrightarrow{\Phi_\omega}\big)\vec f=2\left(
                    \begin{array}{c}
                      -f_{xx}+f-p\phi_\omega^{p-1}\mbox{Re} f-i\phi_\omega^{p-1}\mbox{Im} f \\
                      g \\
                    \end{array}
                  \right)
                  +2i\omega
                  \left(
                    \begin{array}{c}
                      g \\
                     -f \\
                    \end{array}
                  \right).
\end{align}
From the invariance of $S_\omega'\big(\overrightarrow{\Phi_\omega}\big)$ in the rotation and spatial transformations, we have
\begin{align}\label{23.07}
S_\omega''\big(\overrightarrow{\Phi_\omega}\big) i\overrightarrow{\Phi_\omega}=0,\qquad
S_\omega''\big(\overrightarrow{\Phi_\omega}\big) \partial_x\overrightarrow{\Phi_\omega}=0.
\end{align}
Indeed, from
$$
S_\omega'\big(e^{i\theta}\overrightarrow{\Phi_\omega}(\cdot-y)\big)=0,\quad\mbox{ for any } \theta\in\R, y\in\R,
$$
we find that
$$
S_\omega''\big(\overrightarrow{\Phi_\omega}\big)i\overrightarrow{\Phi_\omega}=\partial_\theta S_\omega'\big(e^{i\theta}\overrightarrow{\Phi_\omega}\big)\Big|_{\theta=0}=0,
$$
and
$$
S_\omega''\big(\overrightarrow{\Phi_\omega}\big)\partial_x\overrightarrow{\Phi_\omega}=-\partial_y S_\omega'\big(\overrightarrow{\Phi_\omega}(\cdot-y)\big)\Big|_{y=0}=0.
$$
This gives \eqref{23.07}.

Moreover, taking the derivative of $S_\omega'\big(\overrightarrow{\Phi_\omega}\big)=0$ gives that
\begin{align}
S_\omega''\big(\overrightarrow{\Phi_\omega}\big)\partial_\omega\overrightarrow{\Phi_\omega}=-Q'\big(\overrightarrow{\Phi_\omega}\big).\label{14.10}
\end{align}
Then a consequence of Lemma \ref{lem:QE-Phi} (1) is
\begin{cor}\label{cor:S-lambda} Let $\lambda\in \R^+$, $\omega=\pm\omega_c$, then
\begin{align*}
S_{\lambda\omega}\big(\overrightarrow{\Phi_{\lambda\omega}}\big)-S_{\lambda\omega}\big(\overrightarrow{\Phi_{\omega}}\big)=o\big((\lambda-1)^2\big).
\end{align*}
\end{cor}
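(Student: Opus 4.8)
The plan is to recognize the difference in the corollary as the second-order Taylor remainder of the scalar ``action'' function
$$
d(\mu):=S_\mu\big(\overrightarrow{\Phi_\mu}\big),
$$
and then to use Lemma \ref{lem:QE-Phi}(1) to annihilate the quadratic coefficient of that remainder.

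First I would record the Hellmann--Feynman type identity for $d$. Since $S_\mu=E+\mu Q$, we have $\partial_\mu S_\mu(\vec u)=Q(\vec u)$ at fixed $\vec u$; differentiating $d(\mu)=S_\mu(\overrightarrow{\Phi_\mu})$ in $\mu$ and using that the profile-variation term $\langle S_\mu'(\overrightarrow{\Phi_\mu}),\partial_\mu\overrightarrow{\Phi_\mu}\rangle$ drops out because $S_\mu'(\overrightarrow{\Phi_\mu})=0$, one obtains
$$
d'(\mu)=Q\big(\overrightarrow{\Phi_\mu}\big),\qquad d''(\mu)=\frac{d}{d\mu}Q\big(\overrightarrow{\Phi_\mu}\big).
$$
By Lemma \ref{lem:QE-Phi}(1) this gives $d''(\omega)=0$ when $\omega=\pm\omega_c$. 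The differentiability in $\mu$ of the family $\overrightarrow{\Phi_\mu}$ required here is supplied by the explicit scaling representation $\phi_\mu(x)=(1-\mu^2)^{1/(p-1)}\phi_0(\sqrt{1-\mu^2}\,x)$ from the proof of that lemma, which is smooth for $\mu$ in a neighborhood of $\omega$ inside $(-1,1)$.

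Next I would rewrite the fixed-profile term. Because $S_{\lambda\omega}(\overrightarrow{\Phi_\omega})=E(\overrightarrow{\Phi_\omega})+\lambda\omega\,Q(\overrightarrow{\Phi_\omega})$ is affine in $\lambda$, and since $S_\omega(\overrightarrow{\Phi_\omega})=d(\omega)$ and $Q(\overrightarrow{\Phi_\omega})=d'(\omega)$, it equals
$$
S_{\lambda\omega}\big(\overrightarrow{\Phi_\omega}\big)=d(\omega)+d'(\omega)\,(\lambda\omega-\omega),
$$
which is exactly the first-order Taylor polynomial of $d$ at $\mu=\omega$ evaluated at $\mu=\lambda\omega$. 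On the other hand $S_{\lambda\omega}(\overrightarrow{\Phi_{\lambda\omega}})=d(\lambda\omega)$, so the quantity in the corollary is precisely the Taylor remainder
$$
d(\lambda\omega)-\big[d(\omega)+d'(\omega)(\lambda\omega-\omega)\big]=\tfrac12 d''(\omega)(\lambda\omega-\omega)^2+o\big((\lambda\omega-\omega)^2\big).
$$
Since $d''(\omega)=0$ and $\omega$ is a fixed nonzero constant so that $(\lambda\omega-\omega)^2=\omega^2(\lambda-1)^2$, this is $o\big((\lambda-1)^2\big)$, which is the assertion.

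The calculation is short, and the only step demanding care is the envelope identity $d'(\mu)=Q(\overrightarrow{\Phi_\mu})$, that is, the vanishing of the contribution of $\partial_\mu\overrightarrow{\Phi_\mu}$ owing to $S_\mu'(\overrightarrow{\Phi_\mu})=0$; this is exactly where the regularity of $\mu\mapsto\overrightarrow{\Phi_\mu}$ enters, and I would justify it directly from the scaling formula rather than through an abstract implicit-function argument. Everything else reduces to the elementary observation that subtracting the first-order Taylor polynomial of $d$ leaves a second-order remainder whose leading coefficient $\tfrac12 d''(\omega)$ vanishes precisely at the critical frequency $\omega=\pm\omega_c$.
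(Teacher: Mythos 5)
Your proof is correct, and it takes a genuinely different route from the paper's. The paper works in the function space: it writes $S_{\lambda\omega}(\overrightarrow{\Phi_{\lambda\omega}})-S_{\lambda\omega}(\overrightarrow{\Phi_{\omega}})$ as $S_{\omega}(\overrightarrow{\Phi_{\lambda\omega}})-S_{\omega}(\overrightarrow{\Phi_{\omega}})+(\lambda-1)\omega\big(Q(\overrightarrow{\Phi_{\lambda\omega}})-Q(\overrightarrow{\Phi_{\omega}})\big)$, expands the first difference to second order via the Hessian $S_\omega''(\overrightarrow{\Phi_\omega})$ applied to $\overrightarrow{\Phi_{\lambda\omega}}-\overrightarrow{\Phi_{\omega}}=(\lambda-1)\omega\,\partial_\omega\overrightarrow{\Phi_\omega}+o(\lambda-1)$, and then invokes the identity \eqref{14.10}, $S_\omega''(\overrightarrow{\Phi_\omega})\partial_\omega\overrightarrow{\Phi_\omega}=-Q'(\overrightarrow{\Phi_\omega})$, to convert the quadratic coefficient into $\frac{d}{d\lambda}Q(\overrightarrow{\Phi_{\lambda\omega}})\big|_{\lambda=1}$, which vanishes by Lemma \ref{lem:QE-Phi}(1). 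You instead collapse everything onto the scalar action $d(\mu)=S_\mu(\overrightarrow{\Phi_\mu})$, prove the envelope identity $d'(\mu)=Q(\overrightarrow{\Phi_\mu})$ using only $S_\mu'(\overrightarrow{\Phi_\mu})=0$ and differentiability of $\mu\mapsto\overrightarrow{\Phi_\mu}$, observe that $S_{\lambda\omega}(\overrightarrow{\Phi_\omega})$ is exactly the first-order Taylor polynomial of $d$ at $\omega$ evaluated at $\lambda\omega$, and conclude from $d''(\omega)=\frac{d}{d\mu}Q(\overrightarrow{\Phi_\mu})\big|_{\mu=\omega}=0$ via one-dimensional Taylor with Peano remainder. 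Both arguments hinge on the same key input, Lemma \ref{lem:QE-Phi}(1); what yours buys is that it never touches the second Fr\'echet derivative $S_\omega''$ (whose use requires some care when $1<p<2$, since the nonlinearity is only $C^1$), replacing the infinite-dimensional expansion and identity \eqref{14.10} by elementary calculus on $d$, whose required smoothness is explicit from the scaling formula for $\phi_\mu$; what the paper's route buys is that it reuses machinery ($S_\omega''$, \eqref{14.10}, the expansion of $\overrightarrow{\Phi_{\lambda\omega}}-\overrightarrow{\Phi_\omega}$) that is needed anyway in Lemmas \ref{lem:Coercivity} and \ref{lem:Remainer}, so no separate envelope argument has to be set up.
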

\begin{proof}
From the definition and the Taylor's type expansion,
\begin{align*}
S_{\lambda\omega}\big(\overrightarrow{\Phi_{\lambda\omega}}\big)&-S_{\lambda\omega}\big(\overrightarrow{\Phi_{\omega}}\big)\\
=&
S_{\omega}\big(\overrightarrow{\Phi_{\lambda\omega}}\big)-S_{\omega}\big(\overrightarrow{\Phi_{\omega}}\big)
+(\lambda-1)\omega \Big(Q\big(\overrightarrow{\Phi_{\lambda\omega}}\big)-Q\big(\overrightarrow{\Phi_{\omega}}\big)\Big)\\
&=
\frac12\left\langle S_{\omega}''\big(\overrightarrow{\Phi_{\omega}}\big)\Big(\overrightarrow{\Phi_{\lambda\omega}}-\overrightarrow{\Phi_{\omega}}\Big),
\Big(\overrightarrow{\Phi_{\lambda\omega}}-\overrightarrow{\Phi_{\omega}}\Big)\right\rangle\\
&\qquad
+(\lambda-1)\omega \Big(Q\big(\overrightarrow{\Phi_{\lambda\omega}}\big)-Q\big(\overrightarrow{\Phi_{\omega}}\big)\Big)+o\big((\lambda-1)^2\big).
\end{align*}
Note that
$$
\overrightarrow{\Phi_{\lambda\omega}}-\overrightarrow{\Phi_{\omega}}=(\lambda-1)\omega \partial_\omega\overrightarrow{\Phi_\omega}+o(\lambda-1),
$$
we find that
\begin{align*}
\Big\langle S_{\omega}''\big(\overrightarrow{\Phi_{\omega}}\big)&\Big(\overrightarrow{\Phi_{\lambda\omega}}-\overrightarrow{\Phi_{\omega}}\Big),
\Big(\overrightarrow{\Phi_{\lambda\omega}}-\overrightarrow{\Phi_{\omega}}\Big)\Big\rangle\\
= &
(\lambda-1)^2\omega^2\left\langle S_{\omega}''\big(\overrightarrow{\Phi_{\omega}}\big)\partial_\omega\overrightarrow{\Phi_\omega},
\partial_\omega\overrightarrow{\Phi_\omega}\right\rangle+o\big((\lambda-1)^2\big)\\
= &
-(\lambda-1)^2\omega^2\left\langle Q'\big(\overrightarrow{\Phi_\omega}\big),
\partial_\omega\overrightarrow{\Phi_\omega}\right\rangle+o\big((\lambda-1)^2\big)\\
= &
-(\lambda-1)^2\omega^2\frac{d}{d\lambda} Q\big(\overrightarrow{\Phi_{\lambda\omega}}\big)\Big|_{\lambda=1}+o\big((\lambda-1)^2\big),
\end{align*}
here we have used \eqref{14.10} in the second step.
Using Lemma \ref{lem:QE-Phi} (1), we have
$$
\frac{d}{d\lambda} Q\big(\overrightarrow{\Phi_{\lambda\omega}}\big)\Big|_{\lambda=1}=0.
$$
Hence,
$$
\Big\langle S_{\omega}''\big(\overrightarrow{\Phi_{\omega}}\big)\Big(\overrightarrow{\Phi_{\lambda\omega}}-\overrightarrow{\Phi_{\omega}}\Big),
\Big(\overrightarrow{\Phi_{\lambda\omega}}-\overrightarrow{\Phi_{\omega}}\Big)\Big\rangle=o\big((\lambda-1)^2\big),
$$
and
$$
Q\big(\overrightarrow{\Phi_{\lambda\omega}}\big)-Q\big(\overrightarrow{\Phi_{\omega}}\big)=o\big(\lambda-1\big).
$$
Thus we obtain the desirable estimate.
\end{proof}

\subsection{Coercivity}
First, we need the following lemma.
\begin{lem}\label{lem:negative}
Let $\vec \psi_\omega=(\partial_\omega\phi_\omega, i\omega\partial_\omega\phi_\omega)^T$, $\overrightarrow{\Psi_\omega}=(4\omega\phi_\omega,0)^T$, then
\begin{align}
S_\omega''\big(\overrightarrow{\Phi_\omega}\big)\vec \psi_\omega=\overrightarrow{\Psi_\omega}.\label{23.34}
\end{align}
Moreover, if $|\omega|=\omega_c$, then
\begin{align}
\left\langle S_\omega''\big(\overrightarrow{\Phi_\omega}\big)\vec \psi_\omega, \vec \psi_\omega\right\rangle<0.\label{negative}
\end{align}
\end{lem}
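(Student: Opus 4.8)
The plan is to establish the operator identity \eqref{23.34} by direct substitution into the formula \eqref{23.06}, and then to read off the sign in \eqref{negative} from \eqref{23.34} together with Lemma \ref{lem:QE-Phi}(1). For the first part I write $\vec\psi_\omega=(f,g)^T$ with $f=\partial_\omega\phi_\omega$ and $g=i\omega\partial_\omega\phi_\omega$. Since $\phi_\omega$ is real, so is $\partial_\omega\phi_\omega$, whence $\re f=\partial_\omega\phi_\omega$ and $\im f=0$. The crucial analytic input is the $\omega$-derivative of the ground state equation \eqref{Elliptic}: differentiating $-\phi_{\omega,xx}+(1-\omega^2)\phi_\omega-\phi_\omega^p=0$ in $\omega$ yields the identity $-(\partial_\omega\phi_\omega)_{xx}+(1-\omega^2)\partial_\omega\phi_\omega-p\phi_\omega^{p-1}\partial_\omega\phi_\omega=2\omega\phi_\omega$.

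Carrying out the substitution, the scalar (first) block of \eqref{23.06} contributes $2\big[-(\partial_\omega\phi_\omega)_{xx}+\partial_\omega\phi_\omega-p\phi_\omega^{p-1}\partial_\omega\phi_\omega\big]=2\big[2\omega\phi_\omega+\omega^2\partial_\omega\phi_\omega\big]$ to the first slot, while the rotation block contributes $2i\omega g=-2\omega^2\partial_\omega\phi_\omega$; the two $\omega^2\partial_\omega\phi_\omega$ terms cancel and leave exactly $4\omega\phi_\omega$. In the second slot the two blocks give $2g+2i\omega(-f)=2i\omega\partial_\omega\phi_\omega-2i\omega\partial_\omega\phi_\omega=0$. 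This is precisely $\overrightarrow{\Psi_\omega}=(4\omega\phi_\omega,0)^T$, proving \eqref{23.34}.

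For the sign, I would substitute \eqref{23.34} into the quadratic form, so that $\langle S_\omega''(\overrightarrow{\Phi_\omega})\vec\psi_\omega,\vec\psi_\omega\rangle=\langle\overrightarrow{\Psi_\omega},\vec\psi_\omega\rangle$. Using the definition of the inner product and the reality of $\phi_\omega$, the second components drop out and $\langle\overrightarrow{\Psi_\omega},\vec\psi_\omega\rangle=4\omega\int\phi_\omega\,\partial_\omega\phi_\omega\,dx=2\omega\,\partial_\omega\|\phi_\omega\|_{L^2}^2$. It remains to evaluate this at the critical frequency. From $Q(\overrightarrow{\Phi_\omega})=-\omega\|\phi_\omega\|_{L^2}^2$ one has $\frac{d}{d\omega}Q(\overrightarrow{\Phi_\omega})=-\|\phi_\omega\|_{L^2}^2-\omega\,\partial_\omega\|\phi_\omega\|_{L^2}^2$, and Lemma \ref{lem:QE-Phi}(1) forces this to vanish at $\omega=\pm\omega_c$; hence $\omega\,\partial_\omega\|\phi_\omega\|_{L^2}^2=-\|\phi_\omega\|_{L^2}^2$ and the form equals $-2\|\phi_\omega\|_{L^2}^2<0$, which is \eqref{negative}.

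The computation is essentially mechanical, so I do not anticipate a genuine obstacle; the one point demanding care is the exact cancellation of the $\omega^2\partial_\omega\phi_\omega$ terms in the first slot, which relies on matching the scalar block of $S_\omega''$ against the rotation block and on using the $\omega$-differentiated elliptic equation rather than the equation itself. Conceptually, the only substantive ingredient is that the sign of $\langle\overrightarrow{\Psi_\omega},\vec\psi_\omega\rangle=2\omega\,\partial_\omega\|\phi_\omega\|_{L^2}^2$ is a priori indefinite and is pinned to a strictly negative value precisely by the degeneracy condition $\frac{d}{d\omega}Q|_{\omega=\pm\omega_c}=0$ of Lemma \ref{lem:QE-Phi}(1). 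As a consistency check (and an alternative to invoking the lemma), one may insert the scaling identity $\|\phi_\omega\|_{L^2}^2=(1-\omega^2)^{\frac2{p-1}-\frac12}\|\phi_0\|_{L^2}^2$, after which strict negativity of $2\omega\,\partial_\omega\|\phi_\omega\|_{L^2}^2$ reduces to the factor $\frac2{p-1}-\frac12>0$, i.e.\ to the standing assumption $p<5$.
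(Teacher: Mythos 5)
Your proof is correct and follows essentially the same route as the paper: you obtain \eqref{23.34} from the $\omega$-differentiated elliptic equation (carrying out explicitly the substitution the paper leaves as a ``straightforward computation''), and you pin the sign in \eqref{negative} exactly as the paper does, via
\begin{equation*}
\left\langle \overrightarrow{\Psi_\omega}, \vec \psi_\omega\right\rangle
=2\omega\frac{d}{d\omega}\|\phi_\omega\|_{L^2}^2
=-2\frac{d}{d\omega}Q\big(\overrightarrow{\Phi_\omega}\big)-2\|\phi_\omega\|_{L^2}^2
=-2\|\phi_\omega\|_{L^2}^2<0
\end{equation*}
using Lemma \ref{lem:QE-Phi}(1). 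Your closing consistency check via the scaling identity $\|\phi_\omega\|_{L^2}^2=(1-\omega^2)^{\frac{2}{p-1}-\frac12}\|\phi_0\|_{L^2}^2$ is a valid (if unneeded) alternative, which in fact shows the quadratic form is negative for every $\omega\neq0$ once $p<5$.
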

\begin{proof}
Note that from the equation \eqref{Elliptic}, we have
$$
\big(-\partial_{xx}+(1-\omega^2)-p\phi_\omega^{p-1}\mbox{Re} -i\phi_\omega^{p-1}\mbox{Im} \big)\partial_\omega\phi_\omega=2\omega \phi_\omega.
$$
Then \eqref{23.34} follows from a straightforward computation.

For \eqref{negative}, we have
\begin{align*}
\left\langle S_\omega''\big(\overrightarrow{\Phi_\omega}\big)\vec \psi_\omega, \vec \psi_\omega\right\rangle
&=\left\langle \overrightarrow{\Psi_\omega}, \vec \psi_\omega\right\rangle\\
&=4\omega\int \phi_\omega\>\partial_\omega\phi_\omega\,dx=2\omega\frac{d}{d \omega}\|\phi_\omega\|_{L^2}^2\\
&\qquad =-2\frac{d}{d \omega} Q\left(\overrightarrow{\Phi_\omega}\right)-2\|\phi_\omega\|_{L^2}^2.
\end{align*}
Using Lemma \ref{lem:QE-Phi} (1), when $|\omega|=\omega_c$,
\begin{align*}
\left\langle S_\omega''\big(\overrightarrow{\Phi_\omega}\big)\vec \psi_\omega, \vec \psi_\omega\right\rangle
=-2\|\phi_\omega\|_{L^2}^2<0.
\end{align*}
This proves the lemma.
\end{proof}

Now we have the following coercivity property.
\begin{lem}\label{lem:Coercivity}
Let $\omega=\pm\omega_c$. Suppose that $\vec \xi \in H^1(\R)\times L^2(\R)$ satisfies
$$
\left\langle\vec \xi ,i\overrightarrow{\Phi_\omega}\right\rangle
=\left\langle\vec \xi, \partial_x\overrightarrow{\Phi_\omega}\right\rangle
=\left\langle\vec \xi ,\overrightarrow{\Psi_\omega}\right\rangle=0.
$$
Then
$$
\left\langle S_\omega''\big(\overrightarrow{\Phi_\omega}\big)\vec \xi, \vec \xi\right\rangle
\gtrsim \big\|\vec \xi\big\|_{H^1\times L^2}^2.
$$
\end{lem}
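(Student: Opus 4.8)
The plan is to reduce the vector quadratic form to two scalar Schr\"odinger operators plus manifestly nonnegative terms, and then to read off coercivity from the three orthogonality conditions. First I would compute $\langle S_\omega''(\overrightarrow{\Phi_\omega})\vec\xi,\vec\xi\rangle$ directly from \eqref{23.06}. Writing $\vec\xi=(f,g)^T$ with $f=f_1+if_2$ and $g=g_1+ig_2$ the real and imaginary parts of the two components, and introducing
\begin{align*}
L_+=-\partial_{xx}+(1-\omega^2)-p\phi_\omega^{p-1},\qquad L_-=-\partial_{xx}+(1-\omega^2)-\phi_\omega^{p-1},
\end{align*}
an integration by parts together with a completion of the square in the second component $g$ yields
\begin{align*}
\big\langle S_\omega''(\overrightarrow{\Phi_\omega})\vec\xi,\vec\xi\big\rangle
=2\langle L_+f_1,f_1\rangle+2\langle L_-f_2,f_2\rangle+2\|g_1+\omega f_2\|_{L^2}^2+2\|g_2-\omega f_1\|_{L^2}^2.
\end{align*}
The virtue of this form is that $g$ enters only through the two nonnegative squares, while the possibly indefinite part is carried by the decoupled scalar operators $L_+$ (acting on $\mathrm{Re}\,f$) and $L_-$ (acting on $\mathrm{Im}\,f$). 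In the same notation the three orthogonality conditions become
\begin{align*}
\langle f_1,\phi_\omega\rangle=0,\qquad \langle f_1,\partial_x\phi_\omega\rangle=-\omega\langle g_2,\partial_x\phi_\omega\rangle,\qquad \langle f_2,\phi_\omega\rangle=\omega\langle g_1,\phi_\omega\rangle,
\end{align*}
coming respectively from $\overrightarrow{\Psi_\omega}$, from $\partial_x\overrightarrow{\Phi_\omega}$, and from $i\overrightarrow{\Phi_\omega}$.

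Next I would invoke the classical spectral theory of the one–dimensional ground state. The operator $L_-$ is nonnegative with simple kernel $\mathbb{R}\phi_\omega$ (its ground state), hence $\langle L_-u,u\rangle\gtrsim\|u\|_{H^1}^2$ whenever $u\perp\phi_\omega$. The operator $L_+$ has $\partial_x\phi_\omega$ in its kernel and, since $\partial_x\phi_\omega$ has exactly one node, a single simple negative eigenvalue. The decisive spectral fact is that $\langle L_+^{-1}\phi_\omega,\phi_\omega\rangle<0$; this is precisely the content of Lemma \ref{lem:negative}, since the identity $L_+\partial_\omega\phi_\omega=2\omega\phi_\omega$ established there identifies $\langle L_+^{-1}\phi_\omega,\phi_\omega\rangle$ with a positive multiple of $\langle S_\omega''(\overrightarrow{\Phi_\omega})\vec\psi_\omega,\vec\psi_\omega\rangle=-2\|\phi_\omega\|_{L^2}^2$, the negativity being exactly what the criticality $1<p<5$ (via Lemma \ref{lem:QE-Phi}(1)) guarantees. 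Consequently $L_+\ge0$ on $\{\phi_\omega\}^\perp$ with kernel $\mathbb{R}\partial_x\phi_\omega$, so $\langle L_+u,u\rangle\gtrsim\|u\|_{H^1}^2$ for every $u$ with $u\perp\phi_\omega$ and $u\perp\partial_x\phi_\omega$. The condition $\langle f_1,\phi_\omega\rangle=0$ is exactly what is needed to switch off the negative direction, so $L_+$ controls the part $f_1^\flat$ of $f_1$ orthogonal to $\partial_x\phi_\omega$.

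Finally I would close the estimate sector by sector, exploiting $\omega^2=\omega_c^2=(p-1)/4<1$. In the $(f_1,g_2)$ sector write $f_1=\beta\,\partial_x\phi_\omega+f_1^\flat$ with $f_1^\flat\perp\partial_x\phi_\omega$; the relation $\langle f_1,\partial_x\phi_\omega\rangle=-\omega\langle g_2,\partial_x\phi_\omega\rangle$ gives $|\beta|\,\|\partial_x\phi_\omega\|_{L^2}\le\omega\|g_2\|_{L^2}$, and then the square $\|g_2-\omega f_1\|_{L^2}^2$ together with a triangle inequality yields $(1-\omega^2)\|g_2\|_{L^2}\le\|g_2-\omega f_1\|_{L^2}+\omega\|f_1^\flat\|_{L^2}$, so that $\|g_2\|_{L^2}$, and hence $\beta$ and $\|f_1\|_{H^1}$, are all controlled by the right-hand side of the decomposition. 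The $(f_2,g_1)$ sector is symmetric: writing $f_2=a\phi_\omega+f_2^\perp$ with $f_2^\perp\perp\phi_\omega$, the relation $\langle f_2,\phi_\omega\rangle=\omega\langle g_1,\phi_\omega\rangle$ bounds $|a|$ by $\|g_1\|_{L^2}$, the square $\|g_1+\omega f_2\|_{L^2}^2$ with the absorption $(1-\omega^2)>0$ controls $\|g_1\|_{L^2}$, and $L_-$-coercivity on $\{\phi_\omega\}^\perp$ controls $\|f_2^\perp\|_{H^1}$. Summing the two sectors gives $\langle S_\omega''(\overrightarrow{\Phi_\omega})\vec\xi,\vec\xi\rangle\gtrsim\|\vec\xi\|_{H^1\times L^2}^2$.

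I expect the main obstacle to be that the three orthogonality conditions do \emph{not} project out the three bad directions (the kernel $i\overrightarrow{\Phi_\omega},\,\partial_x\overrightarrow{\Phi_\omega}$ and the negative direction $\vec\psi_\omega$) in a decoupled manner: each condition mixes the first and second components of $\vec\xi$. The resolution, and the point where the hypothesis $p<5$ enters a second time, is the absorption argument above, which is viable only because $\omega^2=(p-1)/4<1$ turns the coupling into a strict contraction. A secondary technical point is the upgrade from the $L^2$ lower bounds produced by $L_\pm$ to the $H^1$ bound on $f$, which is routine via $\|u\|_{H^1}^2\lesssim\langle L_\pm u,u\rangle+\|u\|_{L^2}^2$.
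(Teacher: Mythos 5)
Your proposal is correct, and it takes a genuinely different route from the paper's proof. The paper stays at the vector level: it identifies $\mbox{Ker}\,S_\omega''\big(\overrightarrow{\Phi_\omega}\big)=\mbox{Span}\{i\overrightarrow{\Phi_\omega},\partial_x\overrightarrow{\Phi_\omega}\}$, shows the vector operator has exactly one negative eigenvalue (by reducing the vector eigenvalue problem to the scalar $L_+$ --- the same reduction your algebra makes explicit), expands $\vec\xi=a_{-1}\vec\xi_{-1}+\vec\eta$ along this spectrum, and then kills the dangerous coefficient $a_{-1}$ abstractly: writing $\vec\psi_\omega=b_{-1}\vec\xi_{-1}+\vec g$ and combining $S_\omega''\big(\overrightarrow{\Phi_\omega}\big)\vec\psi_\omega=\overrightarrow{\Psi_\omega}$, the orthogonality $\langle\vec\xi,\overrightarrow{\Psi_\omega}\rangle=0$, and the negativity \eqref{negative}, a Cauchy--Schwarz argument bounds $-\tilde\lambda_{-1}a_{-1}^2$ by $\langle S_\omega''\big(\overrightarrow{\Phi_\omega}\big)\vec\eta,\vec\eta\rangle$ up to a constant. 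You instead diagonalize the quadratic form itself; I checked your algebra and it is right: $\big\langle S_\omega''\big(\overrightarrow{\Phi_\omega}\big)\vec\xi,\vec\xi\big\rangle=2\langle L_+f_1,f_1\rangle+2\langle L_-f_2,f_2\rangle+2\|g_1+\omega f_2\|_{L^2}^2+2\|g_2-\omega f_1\|_{L^2}^2$, and your translation of the three orthogonality conditions is also correct. Your identification of the decisive sign is accurate as well: $L_+\partial_\omega\phi_\omega=2\omega\phi_\omega$ gives $\langle L_+^{-1}\phi_\omega,\phi_\omega\rangle=\frac{1}{8\omega^2}\big\langle S_\omega''\big(\overrightarrow{\Phi_\omega}\big)\vec\psi_\omega,\vec\psi_\omega\big\rangle=-\frac{1}{4\omega^2}\|\phi_\omega\|_{L^2}^2<0$, which is exactly Lemma \ref{lem:negative} combined with Lemma \ref{lem:QE-Phi}(1); and the absorption step closes because $\omega_c^2=\frac{p-1}{4}<1$, so the factors of $\omega$ coupling the two components in the orthogonality relations act as strict contractions. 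What each approach buys: yours is more explicit and elementary --- it avoids Weyl's theorem and the vector spectral decomposition entirely, and it displays transparently the two distinct places where the hypotheses enter (criticality $|\omega|=\omega_c$ for the strict negativity, $p<5$ for the absorption) --- but it outsources its key step to the classical constrained-positivity lemma for $L_+$ (that one negative eigenvalue plus $\langle L_+^{-1}\phi_\omega,\phi_\omega\rangle<0$ implies coercivity on $\{\phi_\omega,\partial_x\phi_\omega\}^{\perp}$), which you invoke without proof; the paper's computation \eqref{18.46}--\eqref{19.24} is precisely a self-contained, vector-valued substitute for that lemma. Both proofs ultimately rest on the same two inputs, Weinstein's scalar spectral facts for $L_\pm$ and the sign in Lemma \ref{lem:negative}, so neither is circular, and either would serve.
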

\begin{proof}
First, we show that
\begin{align}
\mbox{Ker}\Big( S_\omega''\big(\overrightarrow{\Phi_\omega}\big)\Big)=\mbox{Span}\big\{i\overrightarrow{\Phi_\omega}, \partial_x\overrightarrow{\Phi_\omega}\big\}.\label{19.47}
\end{align}
Indeed, from \eqref{23.07}, we have
$$
\big\{i\overrightarrow{\Phi_\omega}, \partial_x\overrightarrow{\Phi_\omega}\big\}\subset \mbox{Ker}\Big( S_\omega''\big(\overrightarrow{\Phi_\omega}\big)\Big).
$$
Hence, to prove \eqref{19.47}, we now turn to show that if
\begin{align}
S_\omega''\big(\overrightarrow{\Phi_\omega}\big)\vec f=0, \label{23.00}
\end{align}
then
\begin{align}
\vec f=c_1i\overrightarrow{\Phi_\omega}+c_2\partial_x\overrightarrow{\Phi_\omega}.\label{23.42}
\end{align}
Let $\vec f=(f,g)$, then from \eqref{23.06}, the equality \eqref{23.00} is equivalent to
\begin{align*}
\left\{
                    \begin{array}{l}
                      -f_{xx}+f-p\phi_\omega^{p-1}\mbox{Re} f-i\phi_\omega^{p-1}\mbox{Im} f+i\omega g=0, \\
                      g-i\omega f=0 \\
                    \end{array}
                  \right.
\end{align*}
This implies that $f$ obeys the equation
$$
-f_{xx}+(1-\omega^2)f-p\phi_\omega^{p-1}\mbox{Re} f-i\phi_\omega^{p-1}\mbox{Im} f=0.
$$
Then from Proposition 2.8 in Weinstein \cite{W2}, we obtain that there exist $c_1\in\R, c_2\in\R$,
$$
f=c_1 \partial_x \phi_\omega+c_2i\phi_\omega.
$$
This yields that
$$
g=i\omega f= c_1i\omega \partial_x \phi_\omega+c_2i\omega \cdot i \phi_\omega.
$$
Hence we have \eqref{23.42} and thus we prove \eqref{19.47}.

Second, we claim that
\begin{align}
S_\omega''\big(\overrightarrow{\Phi_\omega}\big) \mbox{ has exactly one
negative eigenvalue}.\label{0.02}
\end{align}
To prove \eqref{0.02}, we need some well-known facts. It is known that the operator
\begin{align}
-\partial_{xx}+(1-\omega^2)-\phi_\omega^{p-1}\quad \mbox{is non-negative},
\label{0.03}
\end{align}
and the operator
$$
-\partial_{xx}+(1-\omega^2)-p\phi_\omega^{p-1}
$$
has exactly one negative eigenvalue (see Page 489 in Weinstein \cite{W2}).  That is, there uniquely exists a pair $(\lambda_{-1},f_{-1})\in \R^{-}\times H^1(\R)$ such that
$$
-\partial_{xx}f_{-1}+(1-\omega^2)f_{-1}-p\phi_\omega^{p-1}f_{-1}=\lambda_{-1}f_{-1}.
$$
Moreover, the formula \eqref{negative} implies that $S_\omega''\big(\overrightarrow{\Phi_\omega}\big)$ at least exists one negative eigenvalue.
That is, there is at least one negative eigenvalue and its associated eigenvector, say $(\tilde\lambda_{-1},\vec \xi_{-1})\in \R^{-}\times H^1(\R)$, such that
\begin{align}
S_{\omega}''(\overrightarrow{\Phi_\omega})\vec \xi_{-1}=\tilde\lambda_{-1} \vec \xi_{-1}.\label{0.04}
\end{align}
Using \eqref{23.06}, this yields that
\begin{align*}
\left\{
                    \begin{array}{l}
           -\partial_{xx}\xi_{-1}+\xi_{-1}-p\phi_\omega^{p-1}\mbox{Re} \xi_{-1}-i\phi_\omega^{p-1}\mbox{Im} \xi_{-1}+i\omega \eta_{-1}=\tilde\lambda_{-1}\xi_{-1},\\
           \eta_{-1}-i\omega \xi_{-1}=\tilde\lambda_{-1}\eta_{-1},
                    \end{array}
                  \right.
\end{align*}
where $\vec \xi_{-1}=(\eta_{-1},\eta_{-1})$.
This further implies that
\begin{align*}
\left\{
                    \begin{array}{l}
           -\partial_{xx}\xi_{-1}+(1-{\omega}^2)\xi_{-1}-p\phi_\omega^{p-1}\mbox{Re} \xi_{-1}-i\phi_\omega^{p-1}\mbox{Im} \xi_{-1}=\tilde\lambda_{-1}\Big(\frac{\omega^2}{1-\tilde\lambda_{-1}}+1\Big)\xi_{-1},\\
           \eta_{-1}=\frac{i\omega}{1-\tilde\lambda_{-1}}\xi_{-1}.
                    \end{array}
                  \right.
\end{align*}
Now we use facts \eqref{0.03} and \eqref{0.04}, to obtain that
\begin{align}
           \tilde\lambda_{-1}\Big(\frac{\omega^2}{1-\tilde\lambda_{-1}}+1\Big)=\lambda_{-1}, \quad \mbox{and}\quad  \xi_{-1}=f_{-1}.\label{14.56}
\end{align}
Then we find that given $\lambda_{-1}<0$, there exactly exists one negative solution $\tilde\lambda_{-1}<0$, satisfying the first equation in \eqref{14.56}. This implies $S_{\omega}''(\overrightarrow{\Phi_\omega})$ has exactly one negative eigenvalue. That is, there uniquely exists $(\tilde\lambda_{-1}, \vec \xi_{-1})$ satisfying \eqref{0.04}.
This proves \eqref{0.02}.

Now we are ready to prove the lemma.
Since $\phi_\omega$ is exponentially localized, $S_\omega''(\overrightarrow{\Phi_\omega})$ can be considered
as compact perturbation of
\begin{align*}
2\begin{pmatrix}
  -\partial_{xx}+1& i\omega\\
-i\omega&1
\end{pmatrix}.
\end{align*}
Therefore its essential spectrum is $[2(1-\omega^2),\infty)$ and
by Weyl's Theorem its spectrum in $(-\infty,2(1-\omega^2))$ consists of isolated
eigenvalues. Without loss of generality, we may assume that $\vec \xi_{-1}$ is the $L^2\times L^2$-normalized eigenvector associated to the negative eigenvalue $\lambda_{-1}$, that is
\begin{align}
S_{\omega}''(\overrightarrow{\Phi_\omega})\vec \xi_{-1}=\tilde\lambda_{-1} \vec \xi_{-1},\quad \mbox{and }\quad \|\vec\xi_{-1}\|_{L^2\times L^2}=1.\label{0.04'}
\end{align}
According to these, we may write the decomposition of $\vec \xi$ along the spectrum of $S_{\omega}''(\overrightarrow{\Phi_\omega})$ as
$$
\vec \xi = a_{-1}\vec\xi_{-1}+a_{0,1}i\overrightarrow{\Phi_\omega}+a_{0,2}\partial_x\overrightarrow{\Phi_\omega}+\vec \eta,
$$
with $a_{-1},a_{0,1}, a_{0,2}\in \R$, and $\vec \eta$ verifying
$\langle \vec \eta,\vec\xi_{-1}\rangle= \langle \vec \eta,i\overrightarrow{\Phi_\omega}\rangle=\langle \vec \eta,\partial_x\overrightarrow{\Phi_\omega}\rangle=0$ and
\begin{align}
\left\langle S_\omega''\big(\overrightarrow{\Phi_\omega}\big)\vec \eta, \vec \eta\right\rangle
\gtrsim \big\|\vec \eta\big\|_{H^1\times L^2}^2.
\label{17.37}
\end{align}
Since $\langle \vec \xi,i\overrightarrow{\Phi_\omega}\rangle=\langle \vec \xi,\partial_x\overrightarrow{\Phi_\omega}\rangle=0$, we have $a_{0,1}=a_{0,2}=0$, and thus
\begin{align}
\vec \xi = a_{-1}\vec\xi_{-1}+\vec \eta.
\label{17.36}
\end{align}
Similarly, noting that $\langle \vec {\psi_\omega},i\overrightarrow{\Phi_\omega}\rangle=\langle \vec {\psi_\omega},\partial_x\overrightarrow{\Phi_\omega}\rangle=0$, we write
\begin{align}
\vec{\psi_\omega}=b_{-1}\vec \xi_{-1}+\vec g,\label{17.28}
\end{align}
with $b_{-1}\in \R$ and $\vec g$ verifying
$$
\langle \vec g,\vec\xi_{-1}\rangle= \langle \vec g,i\overrightarrow{\Phi_\omega}\rangle=\langle \vec g,\partial_x\overrightarrow{\Phi_\omega}\rangle=0,
\,\,\mbox{and}\,\,\left\langle S_\omega''\big(\overrightarrow{\Phi_\omega}\big)\vec g, \vec g\right\rangle
\gtrsim \big\|\vec g\big\|_{H^1\times L^2}^2.
$$
From \eqref{17.36}, we find
\begin{align}
\left\langle S_\omega''\big(\overrightarrow{\Phi_\omega}\big)\vec \xi, \vec \xi\right\rangle
= \tilde\lambda_{-1}a_{-1}^2+\left\langle S_\omega''\big(\overrightarrow{\Phi_\omega}\big)\vec \eta, \vec \eta\right\rangle.\label{19.16}
\end{align}
Hence by \eqref{17.37}, we only need to estimate $\tilde\lambda_{-1}a_{-1}^2$.
To this end, we shall use the third orthogonality condition.

For simplicity, we denote
$$
\delta_0=-\left\langle S_\omega''\big(\overrightarrow{\Phi_\omega}\big)\vec \psi_\omega, \vec \psi_\omega\right\rangle,
$$
then from \eqref{negative}, we have $\delta_0>0$. Moreover, using \eqref{17.28} we obtain the relationship
\begin{align}
\tilde\lambda_{-1}b_{-1}^2=-\delta_0-\langle S_\omega''\big(\overrightarrow{\Phi_\omega}\big)\vec {g}, \vec {g}\rangle.\label{18.46}
\end{align}
Furthermore, the formulas \eqref{23.34} and \eqref{17.28} imply
$$
\overrightarrow{\Psi_\omega}=\tilde\lambda_{-1} b_{-1}\vec \xi_{-1}+S_\omega''\big(\overrightarrow{\Phi_\omega}\big)\vec g.
$$
Hence,  with combination of \eqref{17.36} and the orthogonality condition $\langle\vec \xi ,\overrightarrow{\Psi_\omega}\rangle=0$, we have
\begin{align}
\tilde\lambda_{-1}a_{-1}b_{-1}+\langle S_\omega''\big(\overrightarrow{\Phi_\omega}\big)\vec {g}, \vec \eta\rangle=0.\label{18.47}
\end{align}
Together with \eqref{18.46} and \eqref{18.47}, and using the Cauchy-Schwartz inequality, we obtain that
\begin{align}
-\tilde\lambda_{-1}a_{-1}^2&=\frac{\tilde\lambda_{-1}^2a_{-1}^2b_{-1}^2}{-\tilde\lambda_{-1}b_{-1}^2}
=\frac{\langle S_\omega''\big(\overrightarrow{\Phi_\omega}\big)\vec {g}, \vec \eta\rangle^2}{\delta_0+\langle S_\omega''\big(\overrightarrow{\Phi_\omega}\big)\vec {g}, \vec {g}\rangle}\notag\\
&\le  \frac{\langle S_\omega''\big(\overrightarrow{\Phi_\omega}\big)\vec \eta, \vec \eta\rangle \langle S_\omega''\big(\overrightarrow{\Phi_\omega}\big)\vec {g}, \vec g\rangle}{\delta_0+\langle S_\omega''\big(\overrightarrow{\Phi_\omega}\big)\vec {g}, \vec {g}\rangle}.\label{19.24}
\end{align}
Hence this combining with \eqref{19.16} and \eqref{17.37}, gives  
\begin{align}
\left\langle S_\omega''\big(\overrightarrow{\Phi_\omega}\big)\vec \xi, \vec \xi\right\rangle
&\ge - \frac{\langle S_\omega''\big(\overrightarrow{\Phi_\omega}\big)\vec \eta, \vec \eta\rangle \langle S_\omega''\big(\overrightarrow{\Phi_\omega}\big)\vec {g}, \vec g\rangle}{\delta_0+\langle S_\omega''\big(\overrightarrow{\Phi_\omega}\big)\vec {g}, \vec {g}\rangle}+\left\langle S_\omega''\big(\overrightarrow{\Phi_\omega}\big)\vec \eta, \vec \eta\right\rangle\notag\\
&= 
\delta_0\frac{\langle S_\omega''\big(\overrightarrow{\Phi_\omega}\big)\vec \eta, \vec \eta\rangle}{\delta_0+\langle S_\omega''\big(\overrightarrow{\Phi_\omega}\big)\vec {g}, \vec {g}\rangle}
\gtrsim \big\|\vec \eta\big\|_{H^1\times L^2}^2.\label{19.32}
\end{align}
Using \eqref{19.24} again, and by H\"older's inequality,  we have 
$$
a_{-1}^2\lesssim \big\|\vec \eta\big\|_{H^1\times L^2}^2.
$$
Hence, from \eqref{17.36}, 
$$
\big\|\vec \xi\big\|_{L^2\times L^2}^2\lesssim a_{-1}^2+ \big\|\vec \eta\big\|_{H^1\times L^2}^2
\lesssim \big\|\vec \eta\big\|_{H^1\times L^2}^2.
$$
This together with \eqref{19.32}, yields 
\begin{align}
\left\langle S_\omega''\big(\overrightarrow{\Phi_\omega}\big)\vec \xi, \vec \xi\right\rangle
\gtrsim \big\|\vec \xi\big\|_{L^2\times L^2}^2. \label{21.07}
\end{align}
Lastly, from the definition of $S_\omega''\big(\overrightarrow{\Phi_\omega}\big)$ in \eqref{23.06}, we have 
\begin{align*}
\big\|\vec \xi\big\|_{H^1\times L^2}^2
&\lesssim\langle S_\omega''\big(\overrightarrow{\Phi_\omega}\big)\vec \xi, \vec \xi\rangle+\|\vec \xi \|_{L^2\times L^2}^2.
\end{align*}
Therefore, followed from \eqref{21.07}, we obtain that 
\begin{align}
\left\langle S_\omega''\big(\overrightarrow{\Phi_\omega}\big)\vec \xi, \vec \xi\right\rangle
\gtrsim \big\|\vec \xi\big\|_{H^1\times L^2}^2. 
\end{align}
This finishes the proof of the lemma.
\end{proof}

\subsection{Modulation}
The following modulation lemma says that if the standing wave solution is stable, then  after suitably choosing the parameters, the orthogonality conditions in Lemma \ref{lem:Coercivity} can be verified.
\begin{lem}\label{lem:modulation}
Let $\omega=\pm\omega_c$. There exists $\varepsilon_0>0$, such that for any $\varepsilon\in(0,\varepsilon_0)$, if $\vec u(t)\in U_\varepsilon(\vec\Phi_\omega)$ for any $t\in\R$, then the following properties is verified. There exist $C^1$-functions
$$
(\theta,y): \R^2\rightarrow \R,\quad
\lambda: \R\rightarrow \R^+,
$$
such that if we define $\vec\xi$ by
\begin{align}
\vec\xi(t)=e^{-i\theta(t)}\vec u\big(t,\cdot-y(t)\big)-\overrightarrow{\Phi_{\lambda(t)\omega}},\label{modulation-u}
\end{align}
then $\vec \xi$ satisfies the following orthogonality conditions for any $t\in\R$,
\begin{align}\label{orth-condition}
\left\langle\vec \xi,i\overrightarrow{\Phi_{\lambda(t)\omega}}\right\rangle=\left\langle\vec\xi,\partial_x\overrightarrow{\Phi_{\lambda(t)\omega}}\right\rangle
=\left\langle\vec\xi,\overrightarrow{\Psi_{\lambda(t)\omega}}\right\rangle=0.
\end{align}
Moreover, the following estimates verify
\begin{align*}
 \|\vec\xi\|_{H^1\times L^2}+|\lambda-1|\lesssim \varepsilon,
\end{align*}
and for any $t\in \R$,
\begin{align*}
 |\dot \theta-\lambda(t)\omega|+|\dot y|+|\dot\lambda|=O\big( \big\|\vec \xi\big\|_{H^1\times L^2}\big).
\end{align*}
\end{lem}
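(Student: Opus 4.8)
The plan is to construct the parameters by the implicit function theorem and then to read off their time evolution by differentiating the orthogonality relations \eqref{orth-condition} along the flow. First I would introduce, for $\vec u\in H^1\times L^2$ close to $\overrightarrow{\Phi_\omega}$, the map
$$
\Upsilon(\vec u,\theta,y,\lambda)=\Big(\big\langle\vec\xi,i\overrightarrow{\Phi_{\lambda\omega}}\big\rangle,\ \big\langle\vec\xi,\partial_x\overrightarrow{\Phi_{\lambda\omega}}\big\rangle,\ \big\langle\vec\xi,\overrightarrow{\Psi_{\lambda\omega}}\big\rangle\Big),
$$
with $\vec\xi$ given by \eqref{modulation-u}. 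Then $\Upsilon(\overrightarrow{\Phi_\omega},0,0,1)=0$, and since the three test vectors are smooth and exponentially localized, $\Upsilon$ is $C^1$ jointly in all variables, the dependence on $y$ being smooth because the translation derivative may be transferred onto the (smooth) test vectors. The entire lemma then reduces to showing that the partial Jacobian $M:=D_{(\theta,y,\lambda)}\Upsilon$ at the reference point is invertible; once this is known, the implicit function theorem produces unique $C^1$ functions $(\theta,y,\lambda)$ of $\vec u$ on a neighborhood of $\overrightarrow{\Phi_\omega}$.

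To compute $M$, note that at the reference point $\vec\xi=0$, so the $\lambda$-dependence of the test vectors contributes nothing and one only needs
$$
\partial_\theta\vec\xi=-i\overrightarrow{\Phi_\omega},\qquad \partial_y\vec\xi=-\partial_x\overrightarrow{\Phi_\omega},\qquad \partial_\lambda\vec\xi=-\omega\,\partial_\omega\overrightarrow{\Phi_\omega}.
$$
The entries of $M$ are the pairings of these three directions against $i\overrightarrow{\Phi_\omega},\partial_x\overrightarrow{\Phi_\omega},\overrightarrow{\Psi_\omega}$. Using that $\phi_\omega$ is real and even (so $\partial_x\phi_\omega$ is odd and $\partial_\omega\phi_\omega$ is even), all six off-diagonal entries vanish: those coupling the translation direction to the other two vanish by parity, while $\langle i\overrightarrow{\Phi_\omega},\overrightarrow{\Psi_\omega}\rangle$ and $\langle\partial_\omega\overrightarrow{\Phi_\omega},i\overrightarrow{\Phi_\omega}\rangle$ vanish because the relevant integrands are purely imaginary. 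Hence $M$ is diagonal with
$$
M_{11}=-\big\|\overrightarrow{\Phi_\omega}\big\|_{L^2\times L^2}^2,\qquad M_{22}=-\big\|\partial_x\overrightarrow{\Phi_\omega}\big\|_{L^2\times L^2}^2,\qquad M_{33}=-\omega\big\langle\vec\psi_\omega,\overrightarrow{\Psi_\omega}\big\rangle,
$$
where I used that only the first component of $\overrightarrow{\Psi_\omega}$ is nonzero to replace $\partial_\omega\overrightarrow{\Phi_\omega}$ by $\vec\psi_\omega$. By \eqref{23.34} and \eqref{negative}, $\langle\vec\psi_\omega,\overrightarrow{\Psi_\omega}\rangle=\langle S_\omega''(\overrightarrow{\Phi_\omega})\vec\psi_\omega,\vec\psi_\omega\rangle=-2\|\phi_\omega\|_{L^2}^2$, so $M_{33}=2\omega\|\phi_\omega\|_{L^2}^2$, which is nonzero precisely because $\omega=\pm\omega_c\neq0$. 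Thus $M$ is invertible. Choosing a nearly optimal rotation-translation representative for $\vec u\in U_\varepsilon(\overrightarrow{\Phi_\omega})$ and using the Lipschitz dependence furnished by the implicit function theorem then yields $\|\vec\xi\|_{H^1\times L^2}+|\lambda-1|\lesssim\varepsilon$; the local uniqueness of the parameters lets one glue them into continuous functions of $t$ on all of $\R$.

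For the dynamical bounds I would differentiate the identities $\langle\vec\xi(t),\vec Z_j(t)\rangle=0$ in time, where $\vec Z_1=i\overrightarrow{\Phi_{\lambda\omega}},\vec Z_2=\partial_x\overrightarrow{\Phi_{\lambda\omega}},\vec Z_3=\overrightarrow{\Psi_{\lambda\omega}}$. Writing $\vec w=e^{-i\theta}\vec u(\cdot-y)=\overrightarrow{\Phi_{\lambda\omega}}+\vec\xi$ and using the gauge and translation equivariance of \eqref{eq:NKG} together with its Hamiltonian form $\partial_t\vec u=\tfrac12 JE'(\vec u)$, $J=\begin{pmatrix}0&1\\-1&0\end{pmatrix}$, one gets $\partial_t\vec w=-i\dot\theta\,\vec w-\dot y\,\partial_x\vec w+\tfrac12 JE'(\vec w)$. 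Since $\tfrac12 JE'(\overrightarrow{\Phi_{\lambda\omega}})=i\lambda\omega\,\overrightarrow{\Phi_{\lambda\omega}}$ (from $S_{\lambda\omega}'(\overrightarrow{\Phi_{\lambda\omega}})=0$), a Taylor expansion in $\vec\xi$ gives
$$
\partial_t\vec\xi=i(\lambda\omega-\dot\theta)\overrightarrow{\Phi_{\lambda\omega}}-\dot y\,\partial_x\overrightarrow{\Phi_{\lambda\omega}}-\dot\lambda\,\omega\,\partial_\omega\overrightarrow{\Phi_{\lambda\omega}}+O\big((1+|\dot\theta|+|\dot y|+|\dot\lambda|)\|\vec\xi\|\big).
$$
Pairing against $\vec Z_1,\vec Z_2,\vec Z_3$ and using $\langle\partial_t\vec\xi,\vec Z_j\rangle=-\langle\vec\xi,\partial_t\vec Z_j\rangle=O(|\dot\lambda|\,\|\vec\xi\|)$ produces a linear system for $(\dot\theta-\lambda\omega,\dot y,\dot\lambda)$ whose matrix is, by the same parity and complex-structure cancellations, an $O(\varepsilon)$ perturbation of the invertible diagonal matrix $M$. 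Solving it and absorbing the $O(\|\vec\xi\|(|\dot\theta|+|\dot y|+|\dot\lambda|))$ terms for $\varepsilon$ small gives $|\dot\theta-\lambda\omega|+|\dot y|+|\dot\lambda|=O(\|\vec\xi\|_{H^1\times L^2})$; the continuity of the right-hand side simultaneously upgrades $(\theta,y,\lambda)$ to $C^1$ in $t$. The time regularity is handled by keeping all derivatives paired against the smooth localized $\vec Z_j$: since \eqref{eq:NKG} is second order, $\partial_t\vec u$ only lives in $L^2\times H^{-1}$, but each $\vec Z_j\in L^2\times H^1$, so every pairing remains well defined and continuous.

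The main obstacle is the invertibility of $M$, which is exactly where the criticality enters. The choice of $\overrightarrow{\Psi_\omega}$ as the third orthogonality direction is dictated by \eqref{23.34}, but it is only the degeneracy identity $\frac{d}{d\omega}Q(\overrightarrow{\Phi_\omega})\big|_{\omega=\pm\omega_c}=0$ of Lemma \ref{lem:QE-Phi}(1)---through \eqref{negative}---that makes $M_{33}=2\omega\|\phi_\omega\|_{L^2}^2\neq0$; the same identity is the source of the degeneracy that the rest of the paper must overcome, so producing a clean nondegenerate modulation frame here is the crux. A secondary difficulty is purely technical: because of the weak time regularity of second-order solutions one must never differentiate $\vec u$ in the energy space itself, only the scalar pairings defining the parameters.
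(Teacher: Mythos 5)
Your proposal is correct and follows essentially the same route as the paper: the implicit function theorem applied to the same three functionals $F_j=\langle\vec\xi,\cdot\rangle$, with the same diagonal Jacobian, followed by pairing the evolution equation for $\vec\xi$ against the three modulation directions and inverting an $O(\varepsilon)$-perturbed diagonal system to get the dynamical bounds. Your Jacobian entries (the squared norms, and $M_{33}=2\omega\|\phi_\omega\|_{L^2}^2$ obtained via \eqref{23.34}--\eqref{negative}) are in fact slightly more careful than the matrix displayed in the paper, which omits the squares and the factor $\omega$; the discrepancy is harmless since the determinant is nonzero either way.
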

\begin{proof}
Since the argument is standard, see c.f. Proposition 1 in \cite{MaMe-GFA-2001} and Proposition 9 in \cite{BeGhLecoz-CPDE-14}, we give the proof much briefly.
The existence of the parameters follows from classical arguments
involving the implicit function theorem. More precisely, fixing $t\in \R$ and writting $\vec u=\vec u(t)$ for short, 
we denote $F_j, j=1,2,3: U_1(\vec\Phi_\omega)\times \R\times \R\times \R^+$ by 
$$
F_1(\vec u,\theta,y,\lambda)=\left\langle\vec \xi,i\overrightarrow{\Phi_{\lambda\omega}}\right\rangle;\,\,
F_2(\vec u,\theta,y,\lambda)=\left\langle\vec\xi,\partial_x\overrightarrow{\Phi_{\lambda\omega}}\right\rangle;\,\,
F_3(\vec u,\theta,y,\lambda)=\left\langle\vec\xi,\overrightarrow{\Psi_{\lambda\omega}}\right\rangle.
$$ 
Then 
$$
F_j\big(\overrightarrow{\Phi_{\lambda\omega}}, 0, 0,1\big)=0,\quad \mbox{for } j=1,2,3.
$$
Moreover, a direct computation gives that 
\begin{align*}
&\left|\begin{array}{ccc}
\partial_\theta F_1 & \partial_y F_1 & \partial_\lambda F_1\\
\partial_\theta F_2 & \partial_y F_2 & \partial_\lambda F_2\\
\partial_\theta F_3 & \partial_y F_3 & \partial_\lambda F_3\\
\end{array}\right|_{(\vec u,\theta,y,\lambda)=\big(\overrightarrow{\Phi_{\lambda\omega}}, 0, 0,1\big)}\\
&\qquad =
\left|\begin{array}{ccc}
-\big\|\overrightarrow{\Phi_{\omega}}\big\|_{L^2\times L^2}& 0 & 0\\
0 & -\big\|\partial_x\overrightarrow{\Phi_{\omega}}\big\|_{L^2\times L^2} & 0\\
0 & 0 & 2\|\phi_\omega\|_{L^2}^2\\
\end{array}\right|\neq0.
\end{align*}
Therefore, the implicit function theorem implies that there exists $\varepsilon_0>0$, such that
for any $\varepsilon\in(0,\varepsilon_0)$, for any $\vec u\in U_\varepsilon(\vec\Phi_\omega)$, there exist continuity functions
$$
(\theta,y): U_\varepsilon(\vec\Phi_\omega)\rightarrow \R^2,\quad
\lambda: U_\varepsilon(\vec\Phi_\omega)\rightarrow \R^+,
$$
such that $F_j(\vec u,\theta,y,\lambda)=0$ for $j=1,2,3$. 

The parameters $(\theta, y,\lambda)\in C^1$ in time can be followed from the regularization arguments, see c.f. Lemma 4 in \cite{MaMe-GFA-2001}.
Now we consider the dynamic of the parameters. From \eqref{modulation-u}, we have 
$$
\vec u(t)=e^{i\theta(t)}\big(\vec \xi+\overrightarrow{\Phi_{\lambda(t)\omega}}\big)\big(t,\cdot+y(t)\big).
$$
Then using this equality, the equations
$$
u_t=v,\quad v_t=\Delta u-u+|u|^{p-1}u,
$$ 
and \eqref{Elliptic}, we obtain that 
\begin{align}
\partial_t\vec\xi+i(\dot \theta-\lambda\omega)\big(\vec \xi+\overrightarrow{\Phi_{\lambda(t)\omega}}\big)+\dot y\partial_x\big(\vec \xi+\overrightarrow{\Phi_{\lambda(t)\omega}}\big)+\dot \lambda\omega\partial_\lambda \overrightarrow{\Phi_{\lambda(t)\omega}}=\mathcal N(\vec \xi).
\label{0.49}
\end{align}
Here we have used the notations $\dot f=\partial_tf$ for the time dependent function $f$, and $\mathcal N(\vec \xi)$ verifying 
$$
\big\langle\mathcal N(\vec\xi),\vec f\big\rangle= O\big( \big\|\vec \xi\big\|_{H^1\times L^2}\big)\big\|\vec f\big\|_{H^1\times L^2},\quad \mbox{ for any }f\in H^1\times L^2.
$$
Now multiplying \eqref{0.49} by $i\overrightarrow{\Phi_{\lambda(t)\omega}}$, $\partial_x\overrightarrow{\Phi_{\lambda(t)\omega}}$ and $\overrightarrow{\Psi_{\lambda(t)\omega}}$, respectively,  integrating by parts and then using the orthogonal conditions \eqref{orth-condition}, we obtain that 
\begin{align*}
(\dot \theta-\lambda\omega)\Big(\|\overrightarrow{\Phi_{\lambda\omega}}\|_{L^2\times L^2}^2+\langle\vec \xi,\overrightarrow{\Phi_{\lambda\omega}}\rangle\Big)+\dot y\langle\partial_x\vec \xi,i\overrightarrow{\Phi_{\lambda\omega}}\rangle-\dot \lambda\omega\langle\vec \xi,i\partial_\lambda\overrightarrow{\Phi_{\lambda\omega}}\rangle=O\big( \big\|\vec \xi\big\|_{H^1\times L^2}\big);
\end{align*}
\begin{align*}
(\dot \theta-\lambda\omega)&\langle i\vec \xi,\partial_x\overrightarrow{\Phi_{\lambda\omega}}\rangle+\dot y\Big(\|\partial_x\overrightarrow{\Phi_{\lambda\omega}}\|_{L^2\times L^2}^2+\langle\partial_x\vec \xi,\partial_x\overrightarrow{\Phi_{\lambda\omega}}\rangle\Big)\\
&-\dot \lambda\omega\langle\vec \xi,\partial_x\partial_\lambda\overrightarrow{\Phi_{\lambda\omega}}\rangle=O\big( \big\|\vec \xi\big\|_{H^1\times L^2}\big);
\end{align*}
and 
\begin{align*}
(\dot \theta-\lambda\omega)&\langle i\vec \xi,\overrightarrow{\Psi_{\lambda\omega}}\rangle+\dot y\langle\partial_x\vec \xi,\overrightarrow{\Psi_{\lambda\omega}}\rangle\\
&-\dot \lambda\omega\Big(2\|\phi_{\lambda\omega}\|_{L^2}^2+\langle\vec \xi,\partial_\lambda\overrightarrow{\Psi_{\lambda\omega}}\rangle\Big)=O\big( \big\|\vec \xi\big\|_{H^1\times L^2}\big);
\end{align*}
With combination of these three estimates, we obtain that 
\begin{align*}
 |\dot \theta-\lambda\omega|+|\dot y|+|\dot\lambda|=O\big( \big\|\vec \xi\big\|_{H^1\times L^2}\big).
\end{align*}
This finishes the proof of the lemma.
\end{proof}

\vskip 2cm

\section{Proof of the main theorem}
\vskip 0.2cm

\subsection{Localized virial identities}
To prove main Theorem \ref{thm:main1}, one of the key ingredient is the localized virial identities.
\begin{lem}\label{lem:virial}
Let $\varphi\in C^1(\R)$, then
\begin{align*}
\frac{d}{dt} \mbox{Re} \int  u\bar u_t\,dx
&=\int\big[|u_t|^2-|u_x|^2-|u|^2+|u|^{p+1}\big]\,dx;\\
\mbox{Re} \int \varphi \frac{d}{dt} \Big(u_x\bar u_t\Big)\,dx
&=-\frac12\int\varphi'\big[|u_t|^2+|u_x|^2-|u|^2+\frac{2}{p+1}|u|^{p+1}\big]\,dx.
\end{align*}
\end{lem}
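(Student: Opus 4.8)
The plan is to derive both identities directly from the equation $u_{tt}-u_{xx}+u=|u|^{p-1}u$ by differentiating the relevant spatial integrals in time and substituting for $u_{tt}$.

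For the first identity, I would start from $\frac{d}{dt}\mbox{Re}\int u\bar u_t\,dx$ and move the time derivative inside. This produces two terms: $\mbox{Re}\int u_t\bar u_t\,dx=\int|u_t|^2\,dx$ and $\mbox{Re}\int u\bar u_{tt}\,dx$. The key step is to replace $\bar u_{tt}$ using the equation, writing $\bar u_{tt}=\bar u_{xx}-\bar u+|u|^{p-1}\bar u$. Substituting gives $\mbox{Re}\int u(\bar u_{xx}-\bar u+|u|^{p-1}\bar u)\,dx$. I would then integrate the term $\mbox{Re}\int u\bar u_{xx}\,dx$ by parts once to turn it into $-\int|u_x|^2\,dx$ (the boundary terms vanish by decay, since we work in the energy space), the second term yields $-\int|u|^2\,dx$, and the third yields $\int|u|^{p+1}\,dx$ since $\mbox{Re}\big(u\,|u|^{p-1}\bar u\big)=|u|^{p+1}$. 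Collecting these reproduces the stated right-hand side.

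For the second identity, I would start from $\mbox{Re}\int\varphi\,\partial_t(u_x\bar u_t)\,dx$ and expand the product rule to get $\mbox{Re}\int\varphi\,u_{xt}\bar u_t\,dx+\mbox{Re}\int\varphi\,u_x\bar u_{tt}\,dx$. For the first piece, I would observe that $\mbox{Re}(u_{xt}\bar u_t)=\tfrac12\partial_x|u_t|^2$, so an integration by parts in $x$ yields $-\tfrac12\int\varphi'|u_t|^2\,dx$. For the second piece I again substitute $\bar u_{tt}=\bar u_{xx}-\bar u+|u|^{p-1}\bar u$, producing three integrals $\mbox{Re}\int\varphi\,u_x\bar u_{xx}\,dx$, $-\mbox{Re}\int\varphi\,u_x\bar u\,dx$, and $\mbox{Re}\int\varphi\,u_x|u|^{p-1}\bar u\,dx$. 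Each is handled by recognizing an exact $x$-derivative: $\mbox{Re}(u_x\bar u_{xx})=\tfrac12\partial_x|u_x|^2$, $\mbox{Re}(u_x\bar u)=\tfrac12\partial_x|u|^2$, and $\mbox{Re}(u_x|u|^{p-1}\bar u)=\tfrac1{p+1}\partial_x|u|^{p+1}$. Integrating each by parts against $\varphi$ transfers the derivative onto $\varphi$, giving $-\tfrac12\int\varphi'|u_x|^2\,dx$, $+\tfrac12\int\varphi'|u|^2\,dx$, and $-\tfrac1{p+1}\int\varphi'|u|^{p+1}\,dx$ respectively. Summing all contributions yields exactly the claimed right-hand side.

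The main obstacle, such as it is, lies less in the algebra than in justifying the manipulations rigorously for genuine $H^1\times L^2$ solutions: differentiating under the integral sign, and performing integrations by parts with vanishing boundary terms, are only formally valid and must be justified by a density or regularization argument. The standard remedy is to establish the identities first for smooth solutions with sufficient decay (where $\varphi'$ compactly supported or bounded makes all integrals finite and the boundary terms clearly vanish), and then pass to the limit using continuous dependence on the data in the energy space together with the assumed local well-posedness theory. The recognition of the three exact-derivative structures, especially $\mbox{Re}(u_x|u|^{p-1}\bar u)=\tfrac1{p+1}\partial_x|u|^{p+1}$, is the one computational point worth checking carefully, but it follows from writing $u=|u|e^{i\alpha}$ or directly differentiating $|u|^{p+1}=(u\bar u)^{(p+1)/2}$.
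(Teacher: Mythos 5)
Your proposal is correct and is precisely the ``direct calculation'' the paper invokes (the paper's proof simply cites Ohta--Todorova for the details): substitute $\bar u_{tt}=\bar u_{xx}-\bar u+|u|^{p-1}\bar u$, recognize the exact $x$-derivatives $\mbox{Re}(u_x\bar u_{xx})=\tfrac12\partial_x|u_x|^2$, $\mbox{Re}(u_x\bar u)=\tfrac12\partial_x|u|^2$, $\mbox{Re}(u_x|u|^{p-1}\bar u)=\tfrac1{p+1}\partial_x|u|^{p+1}$, and integrate by parts; your remark on justifying the formal manipulations for $H^1\times L^2$ solutions by regularization and continuous dependence is also the standard way this is made rigorous.
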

\begin{proof}
It follows from a direct calculation. See \cite{OhtaTod-SIAM-07} for the details.
\end{proof}

Now we define the smooth cutoff function $\varphi_R\in C^\infty(\R)$ as
\begin{align*}
\varphi_R(x)=x, \quad \mbox{when} \quad |x|\le  R;\qquad \varphi_R(x)=0,\quad  \mbox{when} \quad |x|\ge 2R,
\end{align*}
and $0\le \varphi_R'\le 1$ for any $x\in \R$. Moreover, we denote
$$
I(t)=\frac4{p-1}\mbox{Re}\int u\bar u_t\,dx+2\mbox{Re} \int \varphi_R\big(x-y(t)\big) u_x\bar u_t\,dx.
$$
Then from Lemma \ref{lem:virial} we have the following lemma.
\begin{lem}\label{lem:general-Virial} Let $R>0$, if $|\dot y|\lesssim 1$, then
\begin{align*}
I'(t)=&-\frac{p+3}{p-1}\cdot 2E(u_0,u_1)-\frac{16\omega}{p-1}Q(u_0,u_1)-2\dot yP(u_0,u_1)+\frac8{p-1}\|u_t-i\omega u\|_{L^2}^2
\\
&\qquad+O\Big(\int_{|x-y(t)|\ge R} |u_t|^2+|u_x|^2+|u|^2+|u|^{p+1}\,dx\Big).
\end{align*}
\end{lem}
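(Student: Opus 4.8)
The plan is to differentiate $I(t)$ term by term, substitute Lemma \ref{lem:virial}, and then reorganize the resulting global integrals into the conserved quantities $E,Q,P$ plus the time-dependent term $\|u_t-i\omega u\|_{L^2}^2$. Splitting $I=I_1+I_2$ with $I_1=\frac4{p-1}\mbox{Re}\int u\bar u_t\,dx$ and $I_2=2\mbox{Re}\int\varphi_R(x-y(t))u_x\bar u_t\,dx$, Lemma \ref{lem:virial}(1) gives at once
$$
I_1'(t)=\frac4{p-1}\int\big[|u_t|^2-|u_x|^2-|u|^2+|u|^{p+1}\big]\,dx .
$$

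The differentiation of $I_2$ is where the moving cutoff must be handled. Since $\partial_t\varphi_R(x-y(t))=-\dot y\,\varphi_R'(x-y(t))$, I would write
$$
I_2'(t)=2\mbox{Re}\int\varphi_R(x-y(t))\tfrac{d}{dt}\big(u_x\bar u_t\big)\,dx-2\dot y\,\mbox{Re}\int\varphi_R'(x-y(t))u_x\bar u_t\,dx ,
$$
and apply Lemma \ref{lem:virial}(2) to the first integral with $\varphi=\varphi_R(\cdot-y(t))$, which yields $-\int\varphi_R'\big[|u_t|^2+|u_x|^2-|u|^2+\frac2{p+1}|u|^{p+1}\big]\,dx$.

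The key bookkeeping device is the decomposition $\varphi_R'=1-(1-\varphi_R')$. Because $\varphi_R'\equiv1$ on $\{|x-y|\le R\}$ and $0\le\varphi_R'\le1$ everywhere, the factor $1-\varphi_R'$ is supported in $\{|x-y(t)|\ge R\}$. Replacing $\varphi_R'$ by $1$ turns each localized integral into a clean global integral, while the leftover is precisely the claimed remainder $O\big(\int_{|x-y(t)|\ge R}(|u_t|^2+|u_x|^2+|u|^2+|u|^{p+1})\,dx\big)$. Applied to the $\dot y$-term, this isolates the main part $-2\dot y\,\mbox{Re}\int u_x\bar u_t\,dx=-2\dot y\,P(u,u_t)=-2\dot y\,P(u_0,u_1)$ by momentum conservation, the tail being controlled through $|\dot y|\lesssim1$ and $|u_x\bar u_t|\le\frac12(|u_x|^2+|u_t|^2)$.

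Finally I would collect the global integrals coming from $I_1'$ and from the $\varphi_R'=1$ parts of $I_2'$ and recognize them as conserved quantities. Two inputs accomplish this: the charge conservation law together with the elementary identity $\|u_t-i\omega u\|_{L^2}^2=\int|u_t|^2\,dx+2\omega Q+\omega^2\int|u|^2\,dx$, and the critical relation $\omega^2=\omega_c^2=\frac{p-1}4$, i.e. $\frac{8\omega^2}{p-1}=2$. With these, the $\int|u_x|^2$ and $\int|u|^{p+1}$ integrals package into the energy term, the leftover $\int|u_t|^2$ and $\int|u|^2$ integrals combine with the charge into $\frac8{p-1}\|u_t-i\omega u\|_{L^2}^2$ (the $Q$-contributions cancelling against $-\frac{16\omega}{p-1}Q$), and $E,Q$ are frozen to their initial values by conservation. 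I expect the only genuinely delicate point to be the moving-cutoff step: one must retain the $\dot y$ contribution, which is exactly why the hypothesis $|\dot y|\lesssim1$ is imposed, and verify that every term produced by setting $\varphi_R'=1$ is balanced by a tail integral living on $\{|x-y(t)|\ge R\}$, so that no uncontrolled boundary term survives; the remaining matching of coefficients is routine algebra.
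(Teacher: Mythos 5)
Your proposal is correct and takes essentially the same route as the paper's own proof: the same splitting of $I$, the same treatment of the moving cutoff via $\partial_t\varphi_R(x-y(t))=-\dot y\,\varphi_R'(x-y(t))$, the same decomposition $\varphi_R'=1+(\varphi_R'-1)$ producing global integrals plus tail terms supported in $\{|x-y(t)|\ge R\}$ (controlled by $0\le\varphi_R'\le1$, $|\dot y|\lesssim 1$ and momentum conservation), and the same final regrouping through $\|u_t-i\omega u\|_{L^2}^2=\|u_t\|_{L^2}^2+2\omega Q+\omega^2\|u\|_{L^2}^2$ together with $\omega^2=\frac{p-1}{4}$ and the conservation laws. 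One remark: if you actually carry out the ``routine algebra'' with $E$ as defined in \eqref{Energy}, the energy term comes out as $-\frac{p+3}{p-1}E(u_0,u_1)$ rather than $-\frac{p+3}{p-1}\cdot 2E(u_0,u_1)$ (check the coefficient of $\|u_x\|_{L^2}^2$); this factor-of-two normalization quirk is inherited from the statement itself, is equally present in the paper's proof of \eqref{2.35} and in Lemma \ref{lem:QE-Phi}(2), and the two occurrences compensate each other downstream, so it does not affect the argument.
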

\begin{proof}
First, we have
\begin{align*}
\frac{d}{dt}\mbox{Re} \int \varphi_R\big(x-y(t)\big)& u_x\bar u_t\,dx
=-\dot y\mbox{Re} \int \varphi_R'\big(x-y(t)\big) u_x\bar u_t\,dx\\
& +\mbox{Re} \int \varphi_R\big(x-y(t)\big)\frac{d}{dt}\big( u_x\bar u_t\big)\,dx.
\end{align*}
Then from Lemma \ref{lem:virial} and the momentum  conservation law, we obtain
\begin{align*}
\frac{d}{dt}\mbox{Re} &\int \varphi_R\big(x-y(t)\big) u_x\bar u_t\,dx
=-\dot y\mbox{Re} \int \varphi_R'\big(x-y(t)\big) u_x\bar u_t\,dx\\
&\qquad  -\frac12\int\varphi_R'\big(x-y(t)\big)\big[|u_t|^2+|u_x|^2-|u|^2+\frac{2}{p+1}|u|^{p+1}\big]\,dx\\
&=-\dot y P(u_0,u_1)-\dot y\mbox{Re} \int \big[\varphi_R'\big(x-y(t)\big)-1\big] u_x\bar u_t\,dx\\
&\quad  -\frac12\int \big(|u_t|^2+|u_x|^2-|u|^2+\frac{2}{p+1}|u|^{p+1}\big)\,dx\\
&\qquad  -\frac12\int\big[\varphi_R'\big(x-y(t)\big)-1\big]\big(|u_t|^2+|u_x|^2-|u|^2+\frac{2}{p+1}|u|^{p+1}\big)\,dx.
\end{align*}
Since supp$\big[\varphi_R'\big(x-y(t)\big)-1\big]\subset \{x:|x-y(t)|\ge R\}$, $0\le \varphi_R'\le 1$ and $|\dot y|\lesssim 1$, we get
\begin{align*}
\frac{d}{dt}\mbox{Re} &\int \varphi_R\big(x-y(t)\big) u_x\bar u_t\,dx\\
&=-\dot y P(u_0,u_1)  -\frac12\int \big(|u_t|^2+|u_x|^2-|u|^2+\frac{2}{p+1}|u|^{p+1}\big)\,dx\\
&\qquad  +O\Big(\int_{|x-y(t)|\ge R} |u_t|^2+|u_x|^2+|u|^2+|u|^{p+1}\,dx\Big).
\end{align*}
Moreover, from Lemma \ref{lem:virial},
$$
\frac{d}{dt} \mbox{Re} \int  u\bar u_t\,dx
=\int\big[|u_t|^2-|u_x|^2-|u|^2+|u|^{p+1}\big]\,dx.
$$
Combining the two estimates above, we obtain that
\begin{align}
I'(t)=&-2\dot yP(u_0,u_1)+\Big(\frac4{p-1}-1\Big)\|u_t\|_{L^2}^2-\frac{p+3}{p-1}\|u_x\|_{L^2}^2\notag\\
&\qquad+\frac{p-5}{p-1}\|u\|_{L^2}^2+2\frac{p+3}{p^2-1}\|u\|_{L^{p+1}}^{p+1}\notag\\
&\qquad\qquad+O\Big(\int_{|x-y(t)|\ge R} |u_t|^2+|u_x|^2+|u|^2+|u|^{p+1}\,dx\Big).\label{2.35}
\end{align}
Note that when $|\omega|=\omega_c$,
\begin{align*}
\Big(\frac4{p-1}-1\Big)&\|u_t\|_{L^2}^2-\frac{p+3}{p-1}\|u_x\|_{L^2}^2+\frac{p-5}{p-1}\|u\|_{L^2}^2+2\frac{p+3}{p^2-1}\|u\|_{L^{p+1}}^{p+1}\\
& =\frac8{p-1}\|u_t-i\omega u\|_{L^2}^2-\frac{p+3}{p-1}\cdot 2E(u_0,u_1)-\frac{16\omega}{p-1}Q(u_0,u_1).
\end{align*}
Inserting this equality into \eqref{2.35}, we prove the lemma.
\end{proof}

\subsection{The choice of the initial data}
In this subsection, we choose the initial data such that it is close to the standing waves solution but leads the instability. We set
\begin{align}
\vec u_0=(1+a)\overrightarrow{\Phi_\omega},\label{ID-choice}
\end{align}
here $a\in (0,a_0)$ is an arbitrary small constant, and $a_0$ will be decided later.
Then we have
\begin{lem}\label{lem:Charge} Let $\vec u_0$ be defined in \eqref{ID-choice}, then
$$
P(\vec u_0)=0,
$$
and
$$
Q(\vec u_0)-Q\left(\overrightarrow{\Phi_\omega}\right)=-2a\omega \|\phi_\omega\|_{L^2}^2+O(a^2).
$$
\end{lem}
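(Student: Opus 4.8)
The plan is to verify both identities by direct substitution, since $\vec u_0$ is an explicit scalar multiple of $\overrightarrow{\Phi_\omega}$. Writing out the components, the perturbed data reads $u_0=(1+a)\phi_\omega$ and $u_1=(1+a)i\omega\phi_\omega$, where $\phi_\omega$ is the real-valued ground state solving \eqref{Elliptic}. Everything then reduces to evaluating the two quadratic integrals defining $P$ and $Q$ on these explicit functions.

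For the momentum, I would compute
$$
P(\vec u_0)=\mbox{Re}\int (u_0)_x\,\overline{u_1}\,dx
=(1+a)^2\,\mbox{Re}\Big(-i\omega\int \phi_\omega'\,\phi_\omega\,dx\Big).
$$
The real part of $-i\omega\int\phi_\omega'\phi_\omega\,dx$ vanishes because $-i\omega$ is purely imaginary while $\int\phi_\omega'\phi_\omega\,dx$ is real; alternatively, $\phi_\omega'\phi_\omega=\tfrac12(\phi_\omega^2)'$ integrates to zero by the exponential decay of $\phi_\omega$ at infinity. Either observation gives $P(\vec u_0)=0$.

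For the charge, the same substitution yields
$$
Q(\vec u_0)=\mbox{Im}\int u_0\,\overline{u_1}\,dx
=(1+a)^2\,\mbox{Im}\Big(-i\omega\int\phi_\omega^2\,dx\Big)
=-(1+a)^2\omega\|\phi_\omega\|_{L^2}^2.
$$
Recalling from the proof of Lemma \ref{lem:QE-Phi} that $Q(\overrightarrow{\Phi_\omega})=-\omega\|\phi_\omega\|_{L^2}^2$, I would subtract and expand $(1+a)^2=1+2a+a^2$ to obtain
$$
Q(\vec u_0)-Q(\overrightarrow{\Phi_\omega})=-(2a+a^2)\omega\|\phi_\omega\|_{L^2}^2=-2a\omega\|\phi_\omega\|_{L^2}^2+O(a^2),
$$
which is the claimed expansion.

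There is essentially no obstacle here: the statement is a bookkeeping step recording how the conserved quantities of the data $\vec u_0$ deviate from those of the standing wave to first order in $a$. The only mild care needed is tracking the sign conventions in the definitions of $Q$ and $P$, using that $\phi_\omega$ is real, and invoking its exponential decay for the vanishing boundary term. The substantive role of this computation appears later, where the leading term $-2a\omega\|\phi_\omega\|_{L^2}^2$ feeds into the charge conservation law and drives the contradiction in the virial argument.
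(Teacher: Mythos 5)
Your proof is correct. It differs from the paper's in a small but genuine way: the paper proves the charge expansion by linearization, writing $Q(\vec u_0)-Q\big(\overrightarrow{\Phi_\omega}\big)=\big\langle Q'\big(\overrightarrow{\Phi_\omega}\big),\vec u_0-\overrightarrow{\Phi_\omega}\big\rangle+O\big(\|\vec u_0-\overrightarrow{\Phi_\omega}\|_{H^1\times L^2}^2\big)$ and then evaluating the first-variation pairing on $\vec u_0-\overrightarrow{\Phi_\omega}=a\overrightarrow{\Phi_\omega}$, whereas you exploit that $Q$ is a quadratic form and $\vec u_0$ is an exact scalar multiple of the profile, so that $Q(\vec u_0)=(1+a)^2Q\big(\overrightarrow{\Phi_\omega}\big)=-(1+a)^2\omega\|\phi_\omega\|_{L^2}^2$ exactly, with the $O(a^2)$ term identified as precisely $-a^2\omega\|\phi_\omega\|_{L^2}^2$. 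Your route is more elementary and gives an exact identity rather than an expansion with a remainder; the paper's route is the template that recurs in the subsequent lemmas (Lemmas \ref{lem:mainpart}, \ref{lem:S-u0}, \ref{lem:control-lambda}), where the perturbation is no longer a multiple of the profile and the Taylor-with-remainder formulation is what actually generalizes. Your treatment of the momentum (vanishing of $\mathrm{Re}$ of a purely imaginary multiple of the real integral $\int\phi_\omega'\phi_\omega\,dx$, or equivalently the total-derivative argument) fills in the detail the paper dismisses with ``it follows from the definition,'' and both sign conventions you track agree with the paper's $Q\big(\overrightarrow{\Phi_\omega}\big)=-\omega\|\phi_\omega\|_{L^2}^2$.
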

\begin{proof}
It follows from the definition that $P(\vec u_0)=0$. Now consider $Q(\vec u_0)$.
We write
\begin{align*}
Q(\vec u_0)-Q\left(\overrightarrow{\Phi_\omega}\right)
=&\left\langle Q'\left(\overrightarrow{\Phi_\omega}\right),\vec u_0- \overrightarrow{\Phi_\omega}\right\rangle +O\big(\|\vec u_0- \overrightarrow{\Phi_\omega}\|_{H^1\times L^2}^2\big)\\
=&-\omega\left\langle \phi_\omega, u_0-  \phi_\omega\right\rangle-\left\langle i\phi_\omega, u_1-  i\omega\phi_\omega\right\rangle +O\big(a^2\big)\\
=&-2a\omega \|\phi_\omega\|_{L^2}^2+O(a^2).
\end{align*}
This finishes the proof of the lemma.
\end{proof}

Using the lemma above, we can scale the main part in $I'(t)$.
\begin{lem}\label{lem:mainpart} Let $\vec u_0$ be defined in \eqref{ID-choice}, then
$$
-\frac{p+3}{p-1}\cdot 2 E(\vec u_0)-\frac{16\omega}{p-1}Q(\vec u_0)
=\frac{5-p}{p-1}\cdot 4a\omega^2\|\phi_\omega\|_{L^2}^2+O(a^2).
$$
\end{lem}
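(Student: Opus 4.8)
The plan is to exploit that $\vec u_0=(1+a)\overrightarrow{\Phi_\omega}$ is a pure rescaling of the ground-state profile, so that $E(\vec u_0)$ and $Q(\vec u_0)$ are elementary functions of $a$ which I can Taylor expand to first order with an $O(a^2)$ remainder. Since $Q$ is homogeneous of degree two, $Q(\vec u_0)=(1+a)^2Q(\overrightarrow{\Phi_\omega})$; and since the quadratic part of $E$ scales like $(1+a)^2$ while the $L^{p+1}$ part scales like $(1+a)^{p+1}$, I would write $E(\vec u_0)=(1+a)^2\big(\|\partial_x\phi_\omega\|_{L^2}^2+(1+\omega^2)\|\phi_\omega\|_{L^2}^2\big)-\tfrac{2}{p+1}(1+a)^{p+1}\|\phi_\omega\|_{L^{p+1}}^{p+1}$. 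The $O(a^2)$ control is then immediate from the smoothness of $t\mapsto(1+t)^{p+1}$ near $t=0$, and in particular requires no regularity of the nonlinearity beyond $p>1$.

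First I would isolate the value of the combination at $a=0$. The zeroth-order part of $-\tfrac{p+3}{p-1}\cdot 2E(\vec u_0)-\tfrac{16\omega}{p-1}Q(\vec u_0)$ equals $-\tfrac{2}{p-1}\big[(p+3)E(\overrightarrow{\Phi_\omega})+8\omega Q(\overrightarrow{\Phi_\omega})\big]$, which vanishes by Lemma \ref{lem:QE-Phi}(2) precisely because $|\omega|=\omega_c$. This cancellation is the structural reason the quantity is $O(a)$ rather than $O(1)$, and it is exactly the point where the criticality of the frequency enters.

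Next I would compute the two first-order coefficients. For the charge, Lemma \ref{lem:Charge} already records that the coefficient of $a$ in $Q(\vec u_0)$ is $-2\omega\|\phi_\omega\|_{L^2}^2$. For the energy, differentiating the scaling formula at $a=0$ produces $2\big(\|\partial_x\phi_\omega\|_{L^2}^2+(1+\omega^2)\|\phi_\omega\|_{L^2}^2\big)-2\|\phi_\omega\|_{L^{p+1}}^{p+1}$, which I would reduce to a multiple of $\omega^2\|\phi_\omega\|_{L^2}^2$ using the two Pohozaev-type identities for $\phi_\omega$ recorded in the proof of Lemma \ref{lem:QE-Phi} (the relations linking $\|\partial_x\phi_\omega\|_{L^2}^2$, $(1-\omega^2)\|\phi_\omega\|_{L^2}^2$ and $\|\phi_\omega\|_{L^{p+1}}^{p+1}$). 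Substituting both coefficients back, collecting over the common denominator $p-1$, and simplifying the numerical bracket $-4(p+3)+32=4(5-p)$ then yields the asserted identity $\tfrac{5-p}{p-1}\cdot 4a\omega^2\|\phi_\omega\|_{L^2}^2+O(a^2)$.

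The main obstacle is essentially careful bookkeeping rather than any analytic difficulty: I must keep the constant-in-$a$ term and the linear-in-$a$ term cleanly separated and track the coefficient of the energy's linear part correctly through the Pohozaev reductions, since a slip there changes the sign of the leading constant $5-p$, which must remain positive for $1<p<5$ so that the forthcoming virial argument detects a definite sign in $I'(t)$. Once the elliptic identities are used to express every quadratic quantity through $\|\phi_\omega\|_{L^2}^2$, the remaining computation is a one-line algebraic simplification.
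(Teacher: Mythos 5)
Your overall strategy --- expand $Q(\vec u_0)=(1+a)^2Q\big(\overrightarrow{\Phi_\omega}\big)$ and $E(\vec u_0)$ explicitly by homogeneity of each term, kill the zeroth-order part with Lemma \ref{lem:QE-Phi}(2), and compute the linear-in-$a$ coefficient of the energy via the Pohozaev-type identities --- is genuinely different from the paper's proof, which never differentiates $E$ at all: the paper writes $E(\vec u_0)-E\big(\overrightarrow{\Phi_\omega}\big)=\big[S_\omega(\vec u_0)-S_\omega\big(\overrightarrow{\Phi_\omega}\big)\big]-\omega\big[Q(\vec u_0)-Q\big(\overrightarrow{\Phi_\omega}\big)\big]$, uses the criticality $S_\omega'\big(\overrightarrow{\Phi_\omega}\big)=0$ to get $S_\omega(\vec u_0)-S_\omega\big(\overrightarrow{\Phi_\omega}\big)=O(a^2)$, and is then left with only the charge difference from Lemma \ref{lem:Charge}. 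In principle your route is viable and more elementary, but it is also more exposed to normalization errors, and that is exactly where it breaks.

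Concretely, take your formulas literally, i.e.\ with $E$ as displayed in \eqref{Energy} (no $\tfrac12$ factors). Write $K=\|\partial_x\phi_\omega\|_{L^2}^2$, $M=\|\phi_\omega\|_{L^2}^2$, $P=\|\phi_\omega\|_{L^{p+1}}^{p+1}$. Your linear-in-$a$ energy coefficient is $2\big(K+(1+\omega^2)M\big)-2P$, which by the identity $K+(1-\omega^2)M=P$ equals $4\omega^2M$, not $2\omega^2M$; combined with the charge coefficient $-2\omega M$, the bracket is $-8(p+3)+32=-8(p-1)$, so your computation actually produces a linear term $-8a\omega^2M$ --- wrong sign and wrong size. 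The bracket $-4(p+3)+32=4(5-p)$ that you assert would require the energy coefficient to be $2\omega^2M$. Worse, under the same normalization the zeroth-order term does not vanish either: from the two elliptic identities one gets $E\big(\overrightarrow{\Phi_\omega}\big)=2K+2\omega^2M$, hence $(p+3)E\big(\overrightarrow{\Phi_\omega}\big)+8\omega Q\big(\overrightarrow{\Phi_\omega}\big)=2(p-1)M\neq 0$, so Lemma \ref{lem:QE-Phi}(2), which you invoke for the cancellation, is incompatible with the definition \eqref{Energy} you also use. The root cause is an inconsistency in the paper itself: Lemma \ref{lem:QE-Phi}(2), Lemma \ref{lem:general-Virial} and the present lemma are mutually consistent only for the half-normalized energy $E=\tfrac12\|u_t\|_{L^2}^2+\tfrac12\|u_x\|_{L^2}^2+\tfrac12\|u\|_{L^2}^2-\tfrac1{p+1}\|u\|_{L^{p+1}}^{p+1}$, not for the display \eqref{Energy}. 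Your argument does close once you adopt that normalization throughout: then the linear energy coefficient is $K+(1+\omega^2)M-P=2\omega^2M$ and your bracket $4(5-p)$ is correct. The paper's proof is immune to this trap precisely because its only normalization-sensitive input is the black-box Lemma \ref{lem:QE-Phi}(2); as written, your zeroth-order cancellation and your first-order bracket cannot both hold in any single convention, which is precisely the sign-flipping slip you warned against.
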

\begin{proof}
Making use of Lemma \ref{lem:QE-Phi} (2), we have
\begin{align*}
-\frac{p+3}{p-1}&\cdot 2 E(\vec u_0)-\frac{16\omega}{p-1}Q(\vec u_0)\\
=&-\frac{p+3}{p-1}\cdot 2 \Big[E(\vec u_0)-E\left(\overrightarrow{\Phi_\omega}\right)\Big]-\frac{16\omega}{p-1}\Big[Q(\vec u_0)-Q\left(\overrightarrow{\Phi_\omega}\right)\Big]\\
&\qquad -\frac{p+3}{p-1}\cdot 2 E\left(\overrightarrow{\Phi_\omega}\right)-\frac{16\omega}{p-1}Q\left(\overrightarrow{\Phi_\omega}\right)\\
=&-\frac{p+3}{p-1}\cdot 2 \Big[E(\vec u_0)-E\left(\overrightarrow{\Phi_\omega}\right)\Big]-\frac{16\omega}{p-1}\Big[Q(\vec u_0)-Q\left(\overrightarrow{\Phi_\omega}\right)\Big].
\end{align*}
Since
$$
E(\vec u_0)-E\left(\overrightarrow{\Phi_\omega}\right)
=\Big[S(\vec u_0)-S\left(\overrightarrow{\Phi_\omega}\right)\Big]-\omega\Big[Q(\vec u_0)-Q\left(\overrightarrow{\Phi_\omega}\right)\Big],
$$
we further write
\begin{align*}
-\frac{p+3}{p-1}&\cdot 2 E(\vec u_0)-\frac{16\omega}{p-1}Q(\vec u_0)\\
=&-\frac{p+3}{p-1}\cdot 2 \Big[S(\vec u_0)-S\left(\overrightarrow{\Phi_\omega}\right)\Big]-\frac{5-p}{p-1}\cdot 2\omega\Big[Q(\vec u_0)-Q\left(\overrightarrow{\Phi_\omega}\right)\Big].
\end{align*}
By Taylor's type extension, we have
$$
S(\vec u_0)-S\left(\overrightarrow{\Phi_\omega}\right)=O\big(\|\vec u_0- \overrightarrow{\Phi_\omega}\|_{H^1\times L^2}^2\big)=O(a^2).
$$
Now using Lemma \ref{lem:Charge}, we  prove the lemma.
\end{proof}
Similar computation also gives
\begin{lem}\label{lem:S-u0} Let $\lambda \in \R^+$ with $\lambda\lesssim 1$, $\vec u_0$ be defined in \eqref{ID-choice}, then
$$
S_{\lambda \omega}(\vec u_0)-S_{\lambda \omega}\left(\overrightarrow{\Phi_\omega}\right)
= -2(\lambda-1)a\omega^2\|\phi_\omega\|_{L^2}^2+O(a^2).
$$
\end{lem}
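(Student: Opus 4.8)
The plan is to imitate the proof of Lemma~\ref{lem:mainpart}, reducing the claim to the charge expansion of Lemma~\ref{lem:Charge} together with the criticality $S_\omega'(\overrightarrow{\Phi_\omega})=0$. First I would rewrite $S_{\lambda\omega}$ so that the entire $\lambda$-dependence sits in a single term linear in $(\lambda-1)$. Using the definition $S_{\lambda\omega}=E+\lambda\omega Q$ together with $E=S_\omega-\omega Q$, a direct rearrangement gives
\begin{align*}
S_{\lambda\omega}(\vec u_0)-S_{\lambda\omega}\big(\overrightarrow{\Phi_\omega}\big)
=\Big[S_\omega(\vec u_0)-S_\omega\big(\overrightarrow{\Phi_\omega}\big)\Big]
+(\lambda-1)\omega\Big[Q(\vec u_0)-Q\big(\overrightarrow{\Phi_\omega}\big)\Big].
\end{align*}

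Next I would estimate the two brackets separately. For the first, since $S_\omega'(\overrightarrow{\Phi_\omega})=0$ and $\|\vec u_0-\overrightarrow{\Phi_\omega}\|_{H^1\times L^2}=a\,\|\overrightarrow{\Phi_\omega}\|_{H^1\times L^2}=O(a)$, the Taylor-type expansion already invoked in Lemma~\ref{lem:mainpart} yields $S_\omega(\vec u_0)-S_\omega(\overrightarrow{\Phi_\omega})=O(a^2)$. For the second, Lemma~\ref{lem:Charge} gives directly $Q(\vec u_0)-Q(\overrightarrow{\Phi_\omega})=-2a\omega\|\phi_\omega\|_{L^2}^2+O(a^2)$. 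Multiplying this by $(\lambda-1)\omega$ and substituting produces the leading term $-2(\lambda-1)a\omega^2\|\phi_\omega\|_{L^2}^2$ together with a cross term $(\lambda-1)\omega\cdot O(a^2)$, which I would absorb into $O(a^2)$ using the hypothesis $\lambda\lesssim 1$ (so that $|\lambda-1|$ is bounded and $\omega$ is fixed). Collecting the contributions gives
\begin{align*}
S_{\lambda\omega}(\vec u_0)-S_{\lambda\omega}\big(\overrightarrow{\Phi_\omega}\big)
=-2(\lambda-1)a\omega^2\|\phi_\omega\|_{L^2}^2+O(a^2),
\end{align*}
which is the assertion.

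There is no essential obstacle in this lemma; once the identity above is in place the computation is purely algebraic, and it is precisely the sense in which it is a ``similar computation'' to Lemma~\ref{lem:mainpart}. The only point deserving care is the bookkeeping of the error terms: I would check that the constant implicit in the $O(a^2)$ coming from $S_\omega(\vec u_0)-S_\omega(\overrightarrow{\Phi_\omega})$ is independent of $\lambda$, and that the cross term $(\lambda-1)\omega\cdot O(a^2)$ remains $O(a^2)$ uniformly over the bounded range $\lambda\lesssim 1$, so that the stated estimate holds with a single implicit constant.
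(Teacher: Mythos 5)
Your proposal is correct and follows essentially the same route as the paper: the identical decomposition $S_{\lambda\omega}=S_\omega+(\lambda-1)\omega Q$, the Taylor-type expansion at the critical point $S_\omega'\big(\overrightarrow{\Phi_\omega}\big)=0$ giving $S_\omega(\vec u_0)-S_\omega\big(\overrightarrow{\Phi_\omega}\big)=O(a^2)$, and Lemma~\ref{lem:Charge} for the charge difference. Your extra remarks on uniformity of the implicit constants over $\lambda\lesssim 1$ are sound but merely make explicit what the paper leaves implicit.
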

\begin{proof}
By the definition of $S_{\omega}$, we have
\begin{align*}
S_{\lambda \omega}&(\vec u_0)-S_{\lambda \omega}\left(\overrightarrow{\Phi_\omega}\right)\\
=&S_{\omega}(\vec u_0)-S_{ \omega}\left(\overrightarrow{\Phi_\omega}\right)+(\lambda-1)\omega\Big[Q_{\omega}(\vec u_0)-Q_{ \omega}\left(\overrightarrow{\Phi_\omega}\right)\Big].
\end{align*}
Since
$$
S(\vec u_0)-S\left(\overrightarrow{\Phi_\omega}\right)=O(a^2),
$$
then by Lemma \ref{lem:Charge}, we prove the lemma.
\end{proof}

Now we control the rest terms in the virial identity in Lemma \ref{lem:general-Virial}.
We argue for contradiction and suppose that the standing wave solution $u_\omega$ is stable. That is, for any $\varepsilon>0$, there exists a constant $a_0>0$, such that for any $a\in (0,a_0)$, if $\vec u_0\in U_a(\overrightarrow{\Phi_\omega})$, then $\vec u(t)\in U_\varepsilon(\overrightarrow{\Phi_\omega})$ for any $t\in\R$. We may assume that $\vec u\in U_\varepsilon(\vec\Phi_\omega)$ for $\varepsilon\le \varepsilon_0$, where $\varepsilon_0$ is determined in Lemma \ref{lem:modulation}. Hence by Lemma \ref{lem:modulation}, we can write
\begin{align}\label{9.57}
u=e^{i\theta}(\phi_{\lambda\omega}+\xi)(\cdot -y);\qquad
u_t=e^{i\theta}(i\lambda\omega\phi_{\lambda\omega}+\eta)(\cdot -y)
\end{align}
with  $\vec \xi=(\xi,\eta)$ satisfying the orthogonal conditions \eqref{orth-condition}.

\subsection{Lower control of $\|u_t-i\omega u\|_{L^2}$}
In this subsection, we prove the following lemma.
\begin{lem}\label{lem:control-lambda}
Suppose that $\vec \xi=(\xi,\eta)$ defined in \eqref{9.57} satisfying the orthogonal conditions \eqref{orth-condition}, then
\begin{align*}
\|u_t-i\omega u\|_{L^2}^2=&(\lambda-1)^2\omega^2\|\phi_\omega\|_{L^2}^2+\|\eta-i\omega\xi\|_{L^2}^2\\
&\qquad+O\Big(|\lambda-1|^3+a|\lambda-1|+\|\vec \xi\|_{H^1\times L^2}^3\Big).
\end{align*}
\end{lem}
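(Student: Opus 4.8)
The plan is to insert the modulated decomposition \eqref{9.57} directly into $u_t-i\omega u$, reduce to a translation- and phase-invariant $L^2$-norm, and then expand that norm into a quadratic soliton part, the genuine remainder part, and a cross term; only the last piece is delicate.

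First, substituting \eqref{9.57} and using that $\|\cdot\|_{L^2}$ is invariant under multiplication by $e^{i\theta}$ and translation by $y$, I would write
\begin{align*}
u_t-i\omega u=e^{i\theta}\big[i(\lambda-1)\omega\phi_{\lambda\omega}+(\eta-i\omega\xi)\big](\cdot-y),
\end{align*}
so that with $A=i(\lambda-1)\omega\phi_{\lambda\omega}$ and $B=\eta-i\omega\xi$,
\begin{align*}
\|u_t-i\omega u\|_{L^2}^2=\|A\|_{L^2}^2+2\langle A,B\rangle+\|B\|_{L^2}^2.
\end{align*}
The two outer terms are immediate: $\|B\|_{L^2}^2=\|\eta-i\omega\xi\|_{L^2}^2$ is already one of the main terms; and since $\phi_{\lambda\omega}$ is smooth in $\lambda$ near $\lambda=1$, a first-order expansion $\|\phi_{\lambda\omega}\|_{L^2}^2=\|\phi_\omega\|_{L^2}^2+O(|\lambda-1|)$ gives $\|A\|_{L^2}^2=(\lambda-1)^2\omega^2\|\phi_\omega\|_{L^2}^2+O(|\lambda-1|^3)$.

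The main obstacle is the cross term $2\langle A,B\rangle=2(\lambda-1)\omega\langle i\phi_{\lambda\omega},\eta-i\omega\xi\rangle$, for the crude Cauchy--Schwarz bound only yields $O(|\lambda-1|\,\|\vec\xi\|_{H^1\times L^2})$, which is not admissible. To sharpen it I would use that $\phi_{\lambda\omega}$ is real to get $\langle i\phi_{\lambda\omega},\eta-i\omega\xi\rangle=\int\phi_{\lambda\omega}\big(\mbox{Im}\,\eta-\omega\,\mbox{Re}\,\xi\big)\,dx$, and then invoke the third orthogonality condition in \eqref{orth-condition}, namely $\langle\vec\xi,\overrightarrow{\Psi_{\lambda\omega}}\rangle=0$, which is precisely $\int\phi_{\lambda\omega}\,\mbox{Re}\,\xi\,dx=0$; this collapses the cross term to $2(\lambda-1)\omega\int\phi_{\lambda\omega}\,\mbox{Im}\,\eta\,dx$. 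The crux is then to prove that this surviving integral is $O(a)$ rather than merely $O(\|\vec\xi\|_{H^1\times L^2})$, and for this I would turn to the conservation of charge \eqref{charge}.

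Concretely, computing $Q(\vec u)$ from \eqref{9.57} and again discarding the linear term $\int\phi_{\lambda\omega}\,\mbox{Re}\,\xi\,dx=0$ yields
\begin{align*}
Q(\vec u)=Q\big(\overrightarrow{\Phi_{\lambda\omega}}\big)-\int\phi_{\lambda\omega}\,\mbox{Im}\,\eta\,dx+O\big(\|\vec\xi\|_{H^1\times L^2}^2\big).
\end{align*}
Since $Q(\vec u)=Q(\vec u_0)$, combining Lemma \ref{lem:Charge} with the expansion $Q(\overrightarrow{\Phi_{\lambda\omega}})=Q(\overrightarrow{\Phi_\omega})+O((\lambda-1)^2)$ that follows from Lemma \ref{lem:QE-Phi} (1), I obtain
\begin{align*}
\int\phi_{\lambda\omega}\,\mbox{Im}\,\eta\,dx=2a\omega\|\phi_\omega\|_{L^2}^2+O\big(a^2+(\lambda-1)^2+\|\vec\xi\|_{H^1\times L^2}^2\big).
\end{align*}
Multiplying by $2(\lambda-1)\omega$ shows $2\langle A,B\rangle=O\big(a|\lambda-1|+|\lambda-1|^3+\|\vec\xi\|_{H^1\times L^2}^3\big)$, where the mixed term $|\lambda-1|\,\|\vec\xi\|_{H^1\times L^2}^2$ has been absorbed by Young's inequality. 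Assembling $\|A\|_{L^2}^2$, $\|B\|_{L^2}^2$ and this cross-term estimate yields the asserted identity. The one place demanding care is this cross term, where the orthogonality condition against $\overrightarrow{\Psi_{\lambda\omega}}$ and the charge conservation must be combined to replace the a priori factor $\|\vec\xi\|_{H^1\times L^2}$ by the much smaller $a$.
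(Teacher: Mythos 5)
Your proposal is correct and takes essentially the same route as the paper's own proof: the same reduction to $\|i(\lambda-1)\omega\phi_{\lambda\omega}+\eta-i\omega\xi\|_{L^2}^2$ and its expansion, the same use of the third orthogonality condition in \eqref{orth-condition} to remove the $\mbox{Re}\,\xi$ contribution from the cross term, and the same combination of charge conservation with Lemma \ref{lem:Charge} and Lemma \ref{lem:QE-Phi} (1) to bound the surviving term $\int\phi_{\lambda\omega}\,\mbox{Im}\,\eta\,dx$ by $O\big(a+(\lambda-1)^2+\|\vec\xi\|_{H^1\times L^2}^2\big)$. The only cosmetic differences are that you keep the explicit leading constant $2a\omega\|\phi_\omega\|_{L^2}^2$ where the paper just writes $O(a)$, and you spell out the Young-inequality absorption of $|\lambda-1|\,\|\vec\xi\|_{H^1\times L^2}^2$ that the paper leaves implicit.
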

\begin{proof}
By \eqref{9.57}, we expand it as
\begin{align*}
\|u_t-i\omega u\|_{L^2}^2=&\|i\lambda\omega\phi_{\lambda\omega}+\eta-i\omega(\phi_{\lambda \omega}+\xi)\|_{L^2}^2\\
=&\|i(\lambda-1)\omega\phi_{\lambda\omega}+\eta-i\omega\xi\|_{L^2}^2\\
=&(\lambda-1)^2\omega^2\|\phi_{\lambda\omega}\|_{L^2}^2+2(\lambda-1)\omega\big\langle \eta-i\omega \xi, i\phi_{\lambda\omega}\big\rangle+\|\eta-i\omega\xi\|_{L^2}^2.
\end{align*}
Noting that
$$
\|\phi_{\lambda\omega}\|_{L^2}^2=\|\phi_{\omega}\|_{L^2}^2+O(|\lambda-1|),
$$
then combining with the third orthogonal condition in \eqref{orth-condition}, we further get
\begin{align}
\|u_t-i\omega u\|_{L^2}^2
=&(\lambda-1)^2\omega^2\|\phi_{\omega}\|_{L^2}^2+2(\lambda-1)\omega\big\langle \eta, i\phi_{\lambda\omega}\big\rangle\notag\\
&\qquad+\|\eta-i\omega\xi\|_{L^2}^2+O(|\lambda-1|^3).\label{11.09}
\end{align}
Now we consider the term $\langle \eta, i\phi_{\lambda\omega}\rangle$. First, we use the charge conservation law to obtain
\begin{align*}
Q\big(\vec u_0\big)-Q\left(\overrightarrow{\Phi_\omega}\right)+&Q\left(\overrightarrow{\Phi_\omega}\right)-Q\left(\overrightarrow{\Phi_{\lambda\omega}}\right)\\
&=Q\big(\vec u\big)-Q\left(\overrightarrow{\Phi_{\lambda\omega}}\right)\\
&=-\big\langle \xi, \lambda\omega\phi_{\lambda\omega}\big\rangle-\big\langle \eta, i\phi_{\lambda\omega}\big\rangle+O\Big(\|\vec \xi\|_{H^1\times L^2}^2\Big).
\end{align*}
Then by  the third orthogonal conditions in \eqref{orth-condition}, we have
\begin{align*}
\big\langle \eta, i\phi_{\lambda\omega}\big\rangle
&=Q\left(\overrightarrow{\Phi_{\lambda\omega}}\right)-Q\left(\overrightarrow{\Phi_\omega}\right)
-\Big[Q\big(\vec u_0\big)-Q\left(\overrightarrow{\Phi_\omega}\right)\Big]+O\Big(\|\vec \xi\|_{H^1\times L^2}^2\Big).
\end{align*}
From Lemma \ref{lem:QE-Phi}, we have
$$
Q\left(\overrightarrow{\Phi_{\lambda\omega}}\right)-Q\left(\overrightarrow{\Phi_\omega}\right)=O\big(|\lambda-1|^2\big),
$$
and from Lemma \ref{lem:Charge}, we have
$$
Q\big(\vec u_0\big)-Q\left(\overrightarrow{\Phi_\omega}\right)=O(a).
$$
Therefore, we obtain that
\begin{align}
\big\langle \eta, i\phi_{\lambda\omega}\big\rangle
=O\Big(a+|\lambda-1|^2+\|\vec \xi\|_{H^1\times L^2}^2\Big).\label{11.10}
\end{align}
Now together \eqref{11.09} with \eqref{11.10}, we obtain the desirable result.
\end{proof}
\subsection{Upper control of $\|\vec \xi\|_{H^1\times L^2}$}
In this subsection, we give the following estimate on $\|\vec \xi\|_{H^1\times L^2}$.
\begin{lem}\label{lem:Remainer} Let $\vec \xi=(\xi,\eta)$ be defined in \eqref{9.57}, then
\begin{align*}
\|\vec \xi\|_{H^1\times L^2}^2&
=O(a|\lambda-1|+a^2)+o\big((\lambda-1)^2\big).
\end{align*}
\end{lem}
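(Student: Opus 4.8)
The plan is to trap the quadratic form $\big\langle S_{\lambda\omega}''(\overrightarrow{\Phi_{\lambda\omega}})\vec\xi,\vec\xi\big\rangle$ between a coercive lower bound and an explicitly computable upper bound, exploiting that the modulated profile carries the full conserved action while $\overrightarrow{\Phi_{\lambda\omega}}$ is a critical point of $S_{\lambda\omega}$. First I would record the consequence of the conservation laws. Since $E$ and $Q$ are conserved and, under the contradiction hypothesis, the solution is global, at each fixed time $t$ (writing $\lambda=\lambda(t)$) one has
\begin{align*}
S_{\lambda\omega}(\vec u(t)) = E(\vec u(t)) + \lambda\omega\, Q(\vec u(t)) = E(\vec u_0) + \lambda\omega\, Q(\vec u_0) = S_{\lambda\omega}(\vec u_0).
\end{align*}
Because $E$ and $Q$, hence $S_{\lambda\omega}$, are invariant under phase rotation and spatial translation, the decomposition $\vec u = e^{i\theta}(\overrightarrow{\Phi_{\lambda\omega}}+\vec\xi)(\cdot-y)$ from \eqref{9.57} gives $S_{\lambda\omega}(\vec u) = S_{\lambda\omega}(\overrightarrow{\Phi_{\lambda\omega}}+\vec\xi)$. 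Expanding by Taylor around $\overrightarrow{\Phi_{\lambda\omega}}$ and using $S_{\lambda\omega}'(\overrightarrow{\Phi_{\lambda\omega}})=0$ yields
\begin{align*}
\tfrac12\big\langle S_{\lambda\omega}''(\overrightarrow{\Phi_{\lambda\omega}})\vec\xi,\vec\xi\big\rangle = \big[S_{\lambda\omega}(\vec u_0) - S_{\lambda\omega}(\overrightarrow{\Phi_{\lambda\omega}})\big] - o\big(\|\vec\xi\|_{H^1\times L^2}^2\big).
\end{align*}

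Next I would evaluate the bracket. Splitting it as the difference of $S_{\lambda\omega}(\vec u_0)-S_{\lambda\omega}(\overrightarrow{\Phi_\omega})$ and $S_{\lambda\omega}(\overrightarrow{\Phi_{\lambda\omega}})-S_{\lambda\omega}(\overrightarrow{\Phi_\omega})$, Lemma \ref{lem:S-u0} gives the first as $-2(\lambda-1)a\omega^2\|\phi_\omega\|_{L^2}^2 + O(a^2)$, and Corollary \ref{cor:S-lambda} gives the second as $o((\lambda-1)^2)$. Hence the bracket is $O(a|\lambda-1|+a^2) + o((\lambda-1)^2)$, which is precisely the target right-hand side.

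Finally comes the coercive lower bound. Lemma \ref{lem:Coercivity} provides coercivity of $S_\omega''(\overrightarrow{\Phi_\omega})$ only at the exact frequency $\omega=\pm\omega_c$, whereas here the operator is $S_{\lambda\omega}''(\overrightarrow{\Phi_{\lambda\omega}})$ and the constraints \eqref{orth-condition} are taken with respect to $\overrightarrow{\Phi_{\lambda\omega}}$. I would upgrade Lemma \ref{lem:Coercivity} to a statement uniform for $|\lambda-1|$ small by a perturbation argument: reading off \eqref{23.06}, the difference $S_{\lambda\omega}''(\overrightarrow{\Phi_{\lambda\omega}}) - S_\omega''(\overrightarrow{\Phi_\omega})$ enters the quadratic form only through the zeroth-order multipliers $\phi_{\lambda\omega}^{p-1}-\phi_\omega^{p-1}$ and the coupling constant $(\lambda-1)\omega$, both $O(|\lambda-1|)$ in $L^\infty$ by the explicit rescaling and the exponential decay of $\phi_\omega$; likewise the constraint directions $i\overrightarrow{\Phi_{\lambda\omega}},\,\partial_x\overrightarrow{\Phi_{\lambda\omega}},\,\overrightarrow{\Psi_{\lambda\omega}}$ converge to their $\lambda=1$ counterparts. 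Since coercivity under finitely many constraints is stable under such small perturbations of both the form and the constraints, there exist $c>0$ and $\delta>0$ with $\big\langle S_{\lambda\omega}''(\overrightarrow{\Phi_{\lambda\omega}})\vec\xi,\vec\xi\big\rangle \ge c\,\|\vec\xi\|_{H^1\times L^2}^2$ whenever $|\lambda-1|<\delta$ and $\vec\xi$ satisfies \eqref{orth-condition}. Inserting this and absorbing the $o(\|\vec\xi\|^2)$ term into the left then produces $\|\vec\xi\|_{H^1\times L^2}^2 = O(a|\lambda-1|+a^2) + o((\lambda-1)^2)$.

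The main obstacle is exactly this transfer of coercivity to the moving frequency $\lambda\omega$ with a constant independent of $\lambda$: it is essential that one works with the $\lambda$-adapted functional $S_{\lambda\omega}$, for using instead the fixed-frequency $S_\omega$ would leave a linear term $-(\lambda-1)\omega\big\langle Q'(\overrightarrow{\Phi_{\lambda\omega}}),\vec\xi\big\rangle = O(|\lambda-1|\,\|\vec\xi\|_{H^1\times L^2})$, whose Young-inequality splitting only yields $O((\lambda-1)^2)$ and so destroys the sharp $o((\lambda-1)^2)$ remainder required by the statement. A secondary technical point, relevant precisely in the low-regularity range $1<p<2$, is to verify that the second-order Taylor remainder of $S_{\lambda\omega}$ is genuinely $o(\|\vec\xi\|^2)$; here the one-dimensional embedding $H^1(\R)\hookrightarrow L^\infty(\R)$ together with the positivity and exponential localization of $\phi_\omega$ is what makes the remainder estimate go through without differentiating the nonlinearity twice.
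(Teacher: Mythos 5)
Your proof follows the same route as the paper's: conservation of $S_{\lambda\omega}$ along the flow, Taylor expansion about the critical point $\overrightarrow{\Phi_{\lambda\omega}}$, the splitting of the resulting bracket into $S_{\lambda\omega}(\vec u_0)-S_{\lambda\omega}(\overrightarrow{\Phi_\omega})$ and $S_{\lambda\omega}(\overrightarrow{\Phi_{\lambda\omega}})-S_{\lambda\omega}(\overrightarrow{\Phi_\omega})$ handled by Lemma \ref{lem:S-u0} and Corollary \ref{cor:S-lambda} respectively, coercivity of the Hessian under the orthogonality conditions \eqref{orth-condition}, and absorption of the $o(\|\vec\xi\|_{H^1\times L^2}^2)$ remainder. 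The one place where you go beyond the paper --- spelling out the perturbation argument that transfers the coercivity of Lemma \ref{lem:Coercivity} from the fixed frequency $\omega=\pm\omega_c$ to the moving frequency $\lambda\omega$, with a constant uniform for $|\lambda-1|$ small --- addresses a detail the paper silently elides (it simply cites Lemma \ref{lem:Coercivity} for the operator $S_{\lambda\omega}''(\overrightarrow{\Phi_{\lambda\omega}})$), and your treatment of that point is correct.
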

\begin{proof}
From the charge and energy conservation laws,
\begin{align*}
S_{\lambda \omega}\big(\vec u_0\big)&=S_{\lambda \omega}\big(\vec u\big)\\
&=S_{\lambda \omega}\big(\vec u\big)-S_{\lambda \omega}\left(\overrightarrow{\Phi_{\lambda\omega}}\right)+S_{\lambda \omega}\left(\overrightarrow{\Phi_{\lambda\omega}}\right)\\
&=\frac12\left\langle S_{\lambda \omega}''\left(\overrightarrow{\Phi_{\lambda\omega}}\right)\vec \xi, \vec \xi\right\rangle+S_{\lambda \omega}\left(\overrightarrow{\Phi_{\lambda\omega}}\right)+o\big(\|\vec \xi\|_{H^1\times L^2}^2\big).
\end{align*}
Hence by Lemma \ref{lem:Coercivity},
\begin{align*}
\|\vec \xi\|_{H^1\times L^2}^2&\lesssim \frac12\left\langle S_{\lambda \omega}''\left(\overrightarrow{\Phi_{\lambda\omega}}\right)\vec \xi, \vec \xi\right\rangle\\
&=\Big[S_{\lambda \omega}\big(\vec u_0\big)-S_{\lambda \omega}\left(\overrightarrow{\Phi_{\omega}}\right)\Big]-\Big[S_{\lambda \omega}\left(\overrightarrow{\Phi_{\lambda\omega}}\right)-S_{\lambda \omega}\left(\overrightarrow{\Phi_{\omega}}\right)\Big]+o\big(\|\vec \xi\|_{H^1\times L^2}^2\big).
\end{align*}
By Lemma \ref{lem:S-u0},
$$
S_{\lambda \omega}(\vec u_0)-S_{\lambda \omega}\left(\overrightarrow{\Phi_\omega}\right)
= -2(\lambda-1)a\omega^2\|\phi_\omega\|_{L^2}^2+O(a^2),
$$
and by Corollary  \ref{cor:S-lambda},
\begin{align*}
S_{\lambda\omega}\big(\overrightarrow{\Phi_{\lambda\omega}}\big)-S_{\lambda\omega}\big(\overrightarrow{\Phi_{\omega}}\big)=o\big((\lambda-1)^2\big).
\end{align*}
Therefore,
\begin{align*}
\|\vec \xi\|_{H^1\times L^2}^2&
=O(a|\lambda-1|+a^2)+o\big((\lambda-1)^2\big)+o\big(\|\vec \xi\|_{H^1\times L^2}^2\big).
\end{align*}
Absorbing the last term by the left-hand side one, we prove the lemma.
\end{proof}

\subsection{Proof of Theorem \ref{thm:main1}}
As discussion above, we assume that $\vec u\in U_\varepsilon(\vec\Phi_\omega)$, and thus $|\lambda-1|\lesssim \varepsilon$. First, we note that from the definition of $I(t)$, we have the time uniform boundedness of $I(t)$,
\begin{align}
\sup\limits_{t\in\R} I(t)
\lesssim R\Big(\|\overrightarrow{\Phi_{\omega}}\|_{H^1\times L^2}^2+1\Big).\label{bound-It}
\end{align}
Now we consider the estimate on $I'(t)$. First, by \eqref{9.57}, the exponential decaying of $\phi_\omega$ and $\frac12\le \lambda\le \frac32$,
\begin{align*}
\int_{|x-y(t)|\ge R}&\Big[ |u_t|^2+|u_x|^2+|u|^2+|u|^{p+1}\Big]\,dx\\
&\lesssim \int_{|x|\ge R}\Big[ |\phi_{\lambda\omega}|^2+|\partial_x\phi_{\lambda\omega}|^2+|\xi|^2+|\partial_x\xi|^2+|\xi|^{p+1}+|\eta|^2\Big]\,dx\\
&=O\big(\big\|\vec \xi\big\|_{H^1\times L^2}^2+\frac1R\big).
\end{align*}
Hence by Lemma \ref{lem:general-Virial},
\begin{align*}
I'(t)=&-\frac{p+3}{p-1}\cdot 2E(u_0,u_1)-\frac{16\omega}{p-1}Q(u_0,u_1)
\\
&\qquad-2\dot yP(u_0,u_1)+\frac8{p-1}\|u_t-i\omega u\|_{L^2}^2+O\Big(\big\|\vec \xi\big\|_{H^1\times L^2}^2+\frac1R\Big).
\end{align*}
Now by Lemma \ref{lem:mainpart}, Lemma \ref{lem:Charge}, and Lemma \ref{lem:control-lambda}, we have
\begin{align*}
I'(t)=&\frac{5-p}{p-1}\cdot 4a\omega^2\|\phi_\omega\|_{L^2}^2+(\lambda-1)^2\omega^2\|\phi_\omega\|_{L^2}^2+\|\eta-i\omega\xi\|_{L^2}^2
\\
&\qquad +O\Big(a^2+a|\lambda-1|+|\lambda-1|^3+\big\|\vec \xi\big\|_{H^1\times L^2}^2+\frac1R\Big).
\end{align*}
Using Lemma \ref{lem:Remainer}, we further get
\begin{align*}
I'(t)=&\frac{5-p}{p-1}\cdot 4a\omega^2\|\phi_\omega\|_{L^2}^2+(\lambda-1)^2\omega^2\|\phi_\omega\|_{L^2}^2+\|\eta-i\omega\xi\|_{L^2}^2
\\
&\qquad +O\big(a^2+a|\lambda-1|\big)+o\big(|\lambda-1|^2\big).
\end{align*}
Choosing $\varepsilon$ and $a_0$ small enough, we obtain that for any $a\in (0,a_0)$,
\begin{align*}
I'(t)\ge &\frac{5-p}{p-1}\cdot 2a\omega^2\|\phi_\omega\|_{L^2}^2.
\end{align*}
This implies that $I(t)\to +\infty$ when $t\to +\infty$, which is contradicted with \eqref{bound-It}.
Hence we prove the instability of the standing wave $u_\omega$ and thus give the proof of Theorem \ref{thm:main1}.

\vskip .4in
\section*{Acknowledgements}
The author was partially supported by the NSFC (No. 11771325, 11571118). 
The author would also like to express his deep gratitude to  Professor Masaya Maeda for his helpful private discussion, in particular, he introduced me the references \cite{CoPe-CPAM-03, Maeda-JFA-12, Ohta-JFA-11}, and informed me a flaw in \cite{Maeda-JFA-12} such that the correlative theorem only applies to the nonlinear Klein-Gordon equation in cases of $p\ge 2$.

\end{document}